
\RequirePackage[l2tabu,orthodox,abort]{nag}

\documentclass{birkjour}

\usepackage{fixltx2e}
\usepackage[all,error]{onlyamsmath}

\usepackage{amssymb,amsfonts,amsmath,
amsrefs,enumerate,color,url,hyperref,
mathbbol,dsfont}
\usepackage{mathrsfs}
\usepackage{tikz}

\usepackage[T1]{fontenc}
\usepackage[cp1250]{inputenc}

\usepackage[strict=true]{csquotes}

 \newtheorem{thm}{Theorem}[section]
 
 \newtheorem{lem}[thm]{Lemma}
 \newtheorem{prop}[thm]{Proposition}
 \theoremstyle{definition}
 
 \theoremstyle{remark}
 \newtheorem{rem}[thm]{Remark}
 
 \numberwithin{equation}{section}
\usepackage{mathtools}


\usepackage{enumitem}

\begin{document}

			
\title[Diffusion in thin layers separated by a semi-permeable membrane]{Modeling diffusion in thin $2D$-layers separated by a semi-permeable membrane}


\author[A. Bobrowski]{Adam Bobrowski}

\address{
Lublin University of Technology\\
Nadbystrzycka 38A\\
20-618 Lublin, Poland}

\email{a.bobrowski@pollub.pl}




\newcommand{\cxi}{(\xi_i)_{i\in \N} }
\newcommand{\lam}{\lambda}
\newcommand{\eps}{\varepsilon}
\newcommand{\ud}{\, \mathrm{d}}
\newcommand{\pr}{\mathbb{P}}
\newcommand{\f}{\mathcal{F}}
\newcommand{\s}{\mathcal{S}}
\newcommand{\h}{\mathcal{H}}
\newcommand{\ai}{\mathcal{I}}
\newcommand{\R}{\mathbb{R}}
\newcommand{\C}{\mathbb{C}}
\newcommand{\Z}{\mathbb{Z}}
\newcommand{\N}{\mathbb{N}}
\newcommand{\Y}{\mathbb{Y}}
\newcommand{\e}{\mathrm {e}}
\newcommand{\tif}{\widetilde {f}}
\newcommand{\slam}{\sqrt {\lam}}
\newcommand{\Id}{{\mathrm{Id}}}
\newcommand{\cic}{C_{\mathrm{mp}}}
\newcommand{\cod}{C_{\mathrm{odd}}[0,1]}
\newcommand{\cev}{C_{\mathrm{even}}[0,1]}
\newcommand{\cevr}{C_{\mathrm{even}}(\mathbb{R})}
\newcommand{\codr}{C_{\mathrm{odd}}(\mathbb{R})}
\newcommand{\cez}{C_0(0,1]}
\newcommand{\fod}{f_{\mathrm{odd}}} 
\newcommand{\fev}{f_{\mathrm{even}}} 
\newcommand{\sem}[1]{\mbox{$\left (\e^{t{#1}}\right )_{t \ge 0}$}}
\newcommand{\semi}[1]{\mbox{$\left ({#1}\right )_{t > 0}$}}
\newcommand{\semt}[2]{\mbox{$\left (\e^{t{#1}} \otimes_\varepsilon \e^{t{#2}} \right )_{t \ge 0}$}}
\newcommand{\tr}{\textcolor{red}}
\newcommand{\cea}{C_A}
\newcommand{\ceat}{C_A(t)}
\newcommand{\cosinea}{(\ceat )_{t\in \R}}  
\newcommand{\sea}{S_A}
\newcommand{\seat}{S_A(t)}
\newcommand{\sema}{(\seat )_{t\ge 0}}
\newcommand{\wt}{\widetilde}
\renewcommand{\iff}{if and only if }
\renewcommand{\k}{\mathrm{k}}
\newcommand{\tcm}{\textcolor{magenta}}
\newcommand{\tcb}{\textcolor{blue}}
\newcommand{\dx}{\ \textrm {d} x}
\newcommand{\dy}{\ \textrm {d} y}
\newcommand{\dz}{\ \textrm {d} z}
\newcommand{\di}{\textrm{d}}
\newcommand{\tcg}{\textcolor{green}}
\newcommand{\lc}{\mathfrak L_c}
\newcommand{\ls}{\mathfrak L_s}
\newcommand{\grat}{\lim_{t\to \infty}}
\newcommand{\grar}{\lim_{r\to 1-}}
\newcommand{\graR}{\lim_{R\to 1+}}
\newcommand{\grak}{\lim_{\kappa \to \infty}}
\newcommand{\gra}{\lim_{n\to \infty}}
\newcommand{\grae}{\lim_{\eps \to 0}}
\newcommand{\rez}[1]{\left (\lam - #1\right)^{-1}}
\newcommand{\papa}{\hfill $\square$}
\newcommand{\papap}{\end{proof}}
\newcommand {\x}{\mathbb{X}}
\newcommand{\aex}{A_{\mathrm ex}}
\newcommand{\jcg}[1]{\left ( #1 \right )_{n\ge 1} }
\newcommand {\y}{\mathbb{Y}}
\newcommand{\injtp}{\x \hat \otimes_{\varepsilon} \y}
\newcommand{\pin}{\|_{\varepsilon}}
\newcommand{\mc}{\mathcal}
\newcommand{\inter}{\left [0, 1\right ]}
\newcommand{\lir}{\lim_{r \to 1}}
\newcommand{\ha}{\mathfrak {H}}
\newcommand{\dom}[1]{D(#1)}
\newcommand{\mquad}[1]{\quad\text{#1}\quad}
\makeatletter
\newcommand{\normt}{\@ifstar\@normts\@normt}
\newcommand{\@normts}[1]{%
  \left|\mkern-1.5mu\left|\mkern-1.5mu\left|
   #1
  \right|\mkern-1.5mu\right|\mkern-1.5mu\right|
}
\newcommand{\@normt}[2][]{%
  \mathopen{#1|\mkern-1.5mu#1|\mkern-1.5mu#1|}
  #2
  \mathclose{#1|\mkern-1.5mu#1|\mkern-1.5mu#1|}
}
\makeatother

\thanks{Version of \today}
\subjclass{35B25, 35K57, 35K58, 47D06, 47D07}
 \keywords{Feller semigroups of operators, singular perturbations, transmission conditions, thin layers}

\begin{abstract}Motivated by models of signaling pathways in B lymphocytes, which have extremely large nuclei, we study the question of how reaction-diffusion equations in thin $2D$ domains may be approximated by diffusion equations in regions of smaller dimensions. In particular, we study how transmission conditions featuring in the approximating equations become integral parts of the limit master equation. We device a scheme which, by appropriate rescaling of coefficients and finding a common reference space for all Feller semigroups involved, allows deriving the form of the limit equation formally. The results obtained, expressed as convergence theorems for the Feller semigroups, may also be interpreted as a weak convergence  of underlying stochastic processes.

\end{abstract}

\maketitle

\newcommand{\oper}{\mathfrak R_r}
\newcommand{\opern}{\mathfrak R_{\rn}^\mho}
\newcommand{\brn}{\mbox{$\Delta^\mho_{\rn}$}}
\newcommand{\bro}{\mbox{$\Delta_{\rn}$}}
\newcommand{\rn}{r}
\newcommand{\cern}{C\hspace{-0.07cm}\left[\rn, 1\right ]}
\newcommand{\cernbez}{C\left[\rn, 1\right ]}
\newcommand{\cep}{C\hspace{-0.07cm}\left[ 0, 1\right ]}
\newcommand{\copi}{C[0,\pi]}
\newcommand{\CP}{\mbox{$C_p[0,2\pi]$}}
\newcommand{\cerec}{C\hspace{-0.07cm} \left ([0,\pi]\times [r,1]\right)}
\newcommand{\cerecbez}{C \left ([0,\pi]\times [r,1]\right)}
\newcommand{\cerecdwa}{C^2\hspace{-0.07cm} \left ([0,\pi]\times [r,1]\right)}
\newcommand{\cerecj}{C\hspace{-0.07cm} \left ([0,\pi]\times \left [0 ,1\right ]\right)}
\newcommand{\xprim}{C_\theta (UR)}
\newcommand{\ie}{i.e., }
\newcommand{\rla}{R_\lambda}
\newcommand{\grubex}{\mathbb X}
\newcommand{\Jcg}[1]{\left ( #1 \right )_{i=1,...,N}} 
\newcommand{\LB}{\mbox{$\Delta_{\text{{\tiny\textsf LB}}}$}}







\section{Introduction} 
\subsection{General remarks}
In modeling biological processes one often needs to take into account different time-scales of the processes involved \cite{banalachokniga,knigaz}. This is in particular the case when one of the components of the model is a diffusion which in certain circumstances may transpire to be much faster than other processes. For example, 
in the Alt and Lauffenberger's \cite{alt} model of leucocytes reacting to a bacterial invasion by moving up a gradient of some chemical  attractant produced by the bacteria (see Section 13.4.2 in \cite{keenersneyd}) a system of three PDEs is reduced to one equation provided bacterial diffusion is much smaller than the diffusion of leukocytes or of chemoattractants (which is typically the case). Similarly, in the early carcinogenesis model of Marciniak--Czochra and Kimmel \cite{osobliwiec,knigaz,nowaaniaprim,markim1,markim2},  a system of two ODEs coupled with a single diffusion equation (involving Neumann boundary conditions) is replaced by a so-called shadow system of integro-differential equations with ordinary differentiation, provided diffusion may be assumed fast. 

In this context it is worth recalling that one of the fundamental properties of diffusion in a bounded domain is that it `averages' solutions (of the heat equation with Neumann boundary condition) over the domain. As it transpires, it is this homogenization effect of diffusion, when coupled with other physical or biological forces that leads to intriguing singular perturbations  \cite{osobliwiec}; we will see this effect in the analysis presented below also.

The situation this paper is devoted to is not, from the outset, the one in which one component is faster than the other components, but may be translated, as we shall see later, to such a case via an isomorphism. Namely, in modeling dynamics of active and inactive kinases diffusing in a cell, viewed as a 3D ball, by means of a reaction-diffusion equation it has been noted that there are cells, e.g. B-lymphocytes, that have extremely large nuclei (see e.g. \cite{dlajima,thinaccepted}). As a result, kinases diffuse in a very thin layer between the nucleus and the cell membrane, and it seems reasonable to think that the state-space of the process is the unit sphere rather than a spherical shell. The question is whether, besides numerical simulations, there is a rigorous  limiting procedure that would justify this heuristic reasoning. 

The procedure should in particular reveal what happens with the boundary and/or  transmission conditions. To explain: in the model of kinase activity due to Ka\'zmierczak and Lipniacki \cite{kazlip} (see also \cite{bogdan2010}), the processes of phosporylation (activation) and  dephosphorylation (deactivation) of kinases are described by different means. Deactivation takes place when kinases meet phosphatases; since this process takes place in the interior of the cell, it is described by a reaction term in the master equation. On the other hand, kinases are activated only when they touch an active receptor located at the cell membrane: therefore, this process is modeled by a boundary condition. However, if our heuristic reasoning is valid and diffusion in thin $3$D layers should be approximated by a diffusion on a $2D$ surface (sphere), we face the question of what happens with such boundary conditions in the limit, because on the sphere no boundary conditions are needed or possible. From the biological point of view it is clear that such boundary conditions should somehow be transferred to the master equation, because in the $2D$ model kinases meet receptors in the interior of the state space and not on its boundary. These intuitions are also confirmed by numerical simulations (see \cite{thinaccepted}). From the mathematical viewpoint, on the other hand, the process of transferring boundary conditions into new terms of the master equation is still quite intriguing; see \cite{dlajima,thinaccepted} and our Figure \ref{kurcze}. 
\begin{center}
\begin{figure}
\includegraphics[scale=2.1]{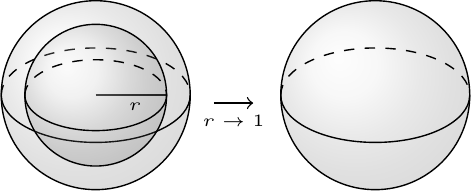}
 \caption{In \cite{dlajima,thinaccepted}, under assumption of axial symmetry, it has been shown that the Cauchy problem for the reaction-diffusion equation $u_t = \Delta u + f(u)$ where $f$ is a Michaelis-Menten function, in the region between two spheres, the inner of radius $r<1$, the outer with radius $1$, supplied with Neumann boundary condition on the inner sphere and the Robin-type boundary condition $u_{\nu} = R (1- u)$ on the outer sphere (where $R$ is a non-negative constant, and $u_{\nu}$ is the outward normal derivative) is well-posed in the space of continuous functions. Moreover, it is shown that as the radius of the inner sphere converges to $1$, solutions to the corresponding problems converge to solutions to the equation $u_t = \Delta_{LB} + R (1-u) + f(u)$ on the sphere, where  $\Delta_{LB}$ is the Laplace--Beltrami operator.} \label{kurcze}
\end{figure}
\end{center} 

\vspace{-0.3cm}

To be sure, the example of $B$ lymphocytes described above, is a member of a much larger class of phenomena.  Biology abounds in cases in which domain of interest is characterized by two different linear dimensions, one small (for instance, describing thickness), the other large. Examples include domains bounded by locally parallel surfaces, such as concentric spheres, coaxial tubes or simply parallel planes. All these are relevant for biological applications. For, within living cells there are numerous membranes, like  mitochondrial, nuclear, endoplasmic reticulum, endosomal,  Golgi, or cell membranes. All of them have thickness of the order of several nanometers, much smaller than the radius of the organelle or cell they encompass \cite{alberts}. Also in multicellular scale there are organs like veins, arteries, intestine, bladder, skin or endothelium, formed by a relatively thin layers of cells, or even monolayer as in the case of endothelium. These organs are all characterized by a big difference between the smallest and the largest linear dimensions.


\subsection{Details of the model considered; transmission conditions}\label{dotm} 

This paper is devoted to a case study in two dimensions with aesthetically pleasing, simple geometry and involving transmission conditions rather than boundary conditions: we imagine particles diffusing in two thin annular layers around the unit circle, and think of the circle as a semi-permeable but homogeneous membrane. In either of these layers chaotic movement of the particles is governed by the diffusion equation with $2D$ Laplace operator and possibly a reaction term. We assume that upon touching the lower border of the smaller annulus the particles are reflected, and that the same mechanism governs their behavior on the upper border of the larger annulus. Therefore,  we impose Neumann no-flux boundary conditions for the concentration $u$ of particles there: 
\[ \frac {\partial u}{\partial \nu } (p) = 0 , \quad \text{ for points } p \, \text{ on the boundary}\]
($\nu$ is the normal vector pointing outwards). We also assume that the membrane of the form of  the unit circle is homogeneous: its permeability does not vary in space. However, its permeability coefficient, say $\alpha>0$ for filtering from the upper annulus to the lower annulus is in general different from the coefficient, say $\beta$, for filtering from the lower annulus to the upper annulus. These assumptions are expressed quantitatively in the following system of transmission conditions: 
\begin{align}
\frac {\partial u}{\partial \nu } (p+) = \alpha (u(p+)- u(p-)),\nonumber \\ 
\frac {\partial u}{\partial \nu } (p-) = \beta  (u(p+)- u(p-)). \label{pierwszeb}
\end{align}
Here, we write $p-$ to denote a point on the unit circle that lies `right below' the membrane, and $p+$ to denote the point that lies `right above' the membrane. Hence, $u(p-)$ and $u(p+)$ are the concentrations measured at the same point but on the different sides of the  membrane. Intuitively, what these boundary conditions are saying is that a Brownian particle diffusing in the upper annulus is being many times reflected from the membrane, the so-called local L\'evy time it spends `at the membrane' is measured, and once an exponential time, with parameter $\alpha$, with respect to the L\'evy time elapses, the particle filters through the membrane to the lower annulus. Here, it behaves similarly: the time it spends `at the membrane' before filtering to the upper annulus is (independent and) exponential with parameter $\beta$. See \cite{knigaz} p. 66, \cite{zmarkusem} p. 669, \cite{bobmor,nagrafach,lejayn} and references given there for more information and more literature on transmission conditions of type \eqref{pierwszeb}; in particular, \cite{knigaz} discusses relation of such transmission conditions to the classical Feller--Wentzell boundary conditions.  
\begin{center}
\begin{figure}
\includegraphics{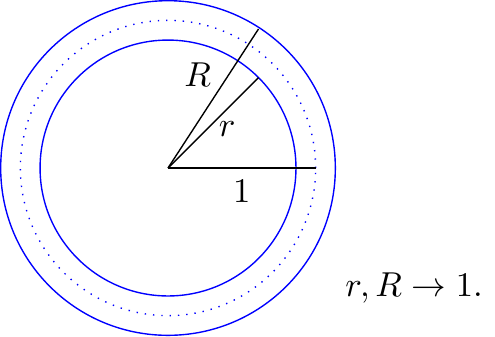}
 \caption{Two thin annuli separated by a circular semi-permeable membrane (depicted as the dotted circle).} \label{fig2}
\end{figure}
\end{center} 

\vspace{-0.3cm}
To repeat, it is intuitively clear that if the annuli are `very thin' this process should be modeled by $1D$ and not $2D$ equations. We will make this statement more precise by showing that, as `thickness' of the annuli converges to zero, solutions to the Cauchy problem related to a $2D$ diffusion equation with boundary and transmission conditions specified above converge, in the sense clarified below, to solutions of equations governing diffusion in regions of smaller dimensions.

To explain this result in more detail, we start by noting that it is intuitively pleasing that in the limit the Laplace operator governing diffusion in the thin layers should be replaced by the Laplace--Beltrami operator describing diffusion on the circle, and that, since in the approximating processes we had two separate regions of diffusion: above and below the membrane, in the limit we should have diffusion on the upper and lower parts of the membrane (see Figure \ref{pstanow1} further down). Hence, the main question is: what happens with the transmission conditions? What is their role in the limit process? 

Extending the principle discussed above and expounded in \cite{dlajima,thinaccepted}, saying that in the thin layer approximation boundary conditions become integral parts of the limit master equation, we will show that also  the
\begin{equation}\label{zasada} \boxed{\text{\textbf{transmission conditions `jump into' the limit master equation.
}}} \end{equation}  More specifically, in Thm. \ref{main1} we will show that (for a suitable choice of parameters, see Remark \ref{parametry})
the limit equation is of the form 
\begin{equation}
\frac {\ud } {\ud  t} \binom {u_+}{u_-} = \begin{pmatrix}\LB & 0 \\ 0 & \LB \end{pmatrix}  \binom {u_+}{u_-}  + \begin{pmatrix} -\alpha & \alpha \\ \beta & - \beta \end{pmatrix} \binom {u_+}{u_-} \label{prawieconvex} \end{equation}
where $u_+$ and $u_-$ are concentrations of particles on the upper and lower sides of the circular membrane, and $\LB$ is the Laplace--Beltrami operator (see e.g. our Section \ref{spacex}). In Theorems \ref{main2} and \ref{main3}, the same principle will be established in two related scenarios (see Section \ref{msg} for more details, and Section \ref{cr} for a discussion of all these three results). 
Formally, the theorems obtained here concern convergence of Feller semigroups of operators governing the approximating and the limiting equations (see below for more on Feller semigroups).

A similar result has been established in \cite{3Dlayers} where the case of diffusion in three dimensional layers separated by a flat membrane was studied in detail. In particular, it was shown there that the principle just mentioned, i.e. that permeability coefficients of the membrane become integral parts of the limit equation, is robust to a change in the way the particles filter through the membrane (i.e., the membrane might be partly sticky): while such a change influences the initial condition of the limit Cauchy problem, it does not alter the limit master equation itself.

\subsection{Mode of convergence, and the form of the limit equation}

An explanation of the mode of convergence is here in order: our point of departure is the statement that a sufficiently regular stochastic process may be described by the corresponding Feller semigroup of operators \cite{kniga,ethier,feller,kallenberg}, defined in the space of continuous functions on the state-space of the process. 

To recall, a family $(T(t))_{t\ge 0}$ of 
operators in a Banach space $\x$ is said to be 
a strongly continuous semigroup if the following three conditions are satisfied 
\begin{itemize} 
\item [(i)] $T(0)f = f, f \in \x$,
\item [(ii)] $T(t)T(s) = T(t+s), s,t\ge 0,$
\item [(iii)] $\lim_{t\to 0+} \|T(t)f - f\| =0, f\in \x$ (where $\|\cdot\|$ is the norm in $\x$).
\end{itemize} 
Each strongly continuous semigroup $(T(t))_{t\ge 0}$ is uniquely characterized by its infinitesimal generator $A$ defined by 
\[ Af = \lim_{t\to 0+} t^{-1} (T(t)f - f),\] 
on the domain $\dom{A}$ composed of $f\in \x$ for which this limit exists (in the sense of the $\x$ norm). The fundamental Hille--Yosida--Feller--Phillips--Miyadera Theorem characterizes operators $A$ that are generators of strongly continuous semigroups \cite{abhn,kniga,engel,ethier,feller,kallenberg}. In what follows, the semigroup generated by $A$ will be denoted $\sem{A}$. 

A conservative Feller semigroup $(T(t))_{t\ge 0}$ in the space $C(S)$ of continuous functions on a compact metric space  $S$ is a strongly continuous semigroup in $C(S)$ such that $T(t)f \ge 0$ whenever $f\ge 0$, and $T(t)1_S = 1_S, t \ge 0$, where $1_S (x) = 1, x \in S$. A characterization of conservative Feller generators may be found e.g. in \cite{kniga,ethier,kallenberg,liggett}, see also our Appendix (Section \ref{appendix}): $A$ is a Feller  generator iff it is densely defined, and satisfies the maximum principle and the so-called range condition; additionally $1_S $ must belong to $\dom{A}$ and we need to have $A1_S=0$.

With a conservative Feller semigroup in $C(S)$ there is (in a sense unique) honest stochastic process $X(t), t\ge 0$ with values in $S$ such that 
\[ T(t) f (x) = E_x f(X(t)) , \qquad t\ge 0, x \in S, f \in C(S),\]
where $E_x$ denotes the expected value conditional on $X(0)=x$. On the other hand, semigroups are related to Cauchy problems \cite{abhn,engel}: the Cauchy problem in a Banach space $\x$: 
\begin{equation}\label{cauchy}    u'(t) = A u (t) , \qquad t \ge 0, u(0)= x\in \x,\end{equation}
is well-posed iff $A$ is a semigroup generator. If this is the case, the function $t\mapsto u(t)= T(t)x$ (said to be a  trajectory of the semigroup) is a (mild) solution to the Cauchy problem. In the case of Feller semigroups, \eqref{cauchy} is the Kolmogorov backward equation for the related process.

Coming back to our main subject: the Banach spaces where the Feller semigroups governing diffusion equations involved in the thin approximation are defined, vary with the thickness of the domain. Hence, we consider their isomorphic images in a single reference space, corresponding to fixed thickness, and show that these images govern diffusion equations with faster and faster radial diffusion.  When seen in this reference space, therefore, the semigroups' trajectories gradually lose dependence on the radius. However, since they differ in the upper and lower annuli, in the limit they may be identified with two functions depending on the angle (and time). The reference space is the space of continuous functions on two annuli of fixed thickness, and the limit semigroup acts in the subspace of functions 
that, when restricted to either of these annuli, depend not on the radius. This subspace is  clearly isometrically isomorphic to the space of pairs of continuous functions on the circle. In other words, while the state-space of the approximating Feller processes is composed of two annuli, the state-space of the limit process is composed of two circles, or two sides of the same circle.

We may now comment on the form of the limit equation \eqref{prawieconvex}: this equation expresses an intriguing fact, which could perhaps have been expected on the basis of biological background, that  permeability coefficients featuring in \eqref{pierwszeb} in the limit become jump intensities of the process (compare \cite{zmarkusem}*{Remark 2.1}). For, a~typical realization of the random process governed by \eqref{prawieconvex} is as follows: a~particle diffusing on the lower side of the circle jumps, after an exponential time with parameter $\alpha$, to the point directly on the opposite side of the circle (i.e. the point with the same coordinates, but lying on the other side of the membrane) and continues diffusing on the upper side until, after another, independent, exponential time with parameter $\beta$, it jumps back to the lower side, and so on.   It is worth noting that such processes are closely related to piecewise deterministic Markov processes of M.H.A Davis \cite{davis,davisk,davisk2,rudnickityran}, random evolutions of R.J. Griego and R. Hersh \cite{ethier,gh1,gh2,pinskyrandom} and to randomly switching diffusions \cite{dwoje,ilin,yin}; for a semigroup theoretic context see  \cite{convex,knigaz}.

\subsection{Three scanarios}\label{msg}
\begin{center}
\begin{figure}
\includegraphics[scale=0.9]{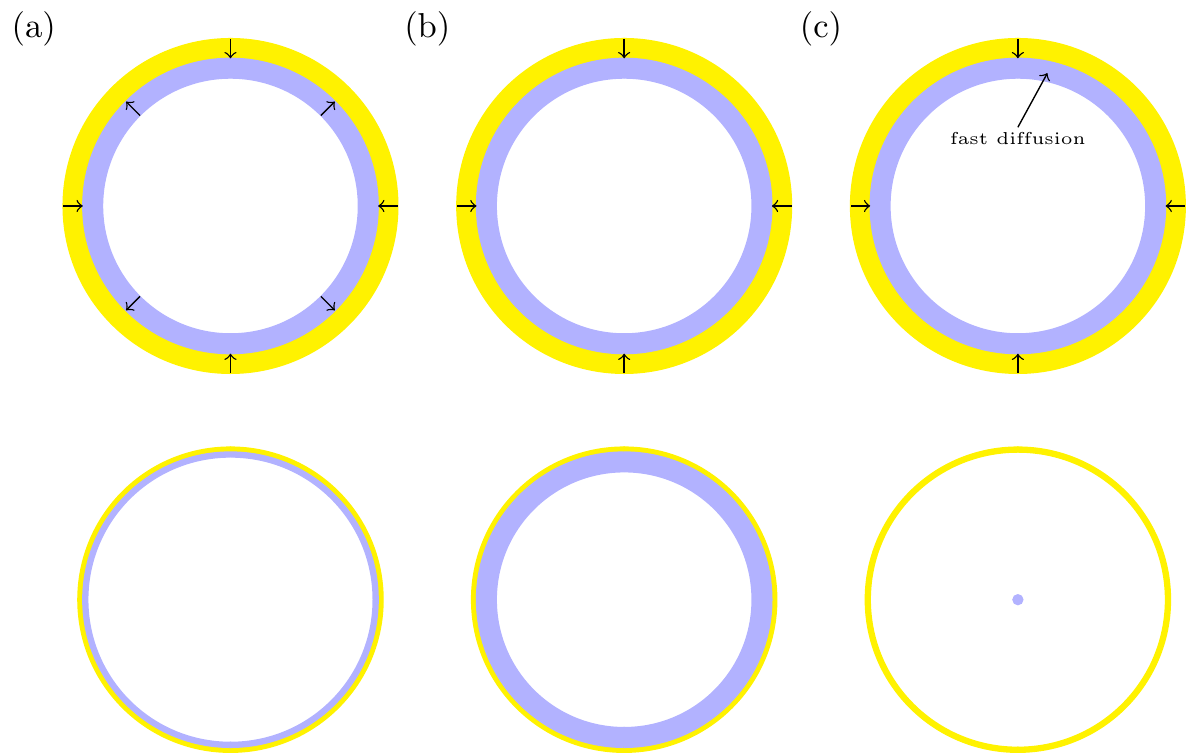}
 \caption{Different limit state-spaces resulting from different approximations: In the case (a) both layers are thin, and the limit state-space is composed of the upper and the lower sides of the unit circle. In the case (b) only the upper layer is thin, and thus the limit state-space is composed of the union of the unit circle and the lower layer/annulus. In the case (c), even though the lower layer is thick, diffusion there is fast and thus the limit state-space is the union of the unit circle and the point formed by compounding all elements of the lower annulus.} \label{fig4}
\end{figure}
\end{center} 

\vspace{-0.5cm}

As already mentioned, in the present paper we deal with different geometry than in \cite{3Dlayers}: we focus on two dimensions and choose the example of a circular membrane as a case study. A particular goal of this study is to establish the fact that in the thin layer approximation transmission conditions become integral parts of the limit equation in three natural scenarios similar to that described in Section \ref{dotm}.

To explain, in Section \ref{dotm} we assumed that both the upper and the lower layers are thin, and in the limit obtained two coupled PDEs on a surface of dimension one (see Figure \ref{fig4} (a)). However, one may also think of a `thick' two-dimensional region bordering a `thin' two-dimensional region (see Figure \ref{fig4} (b)). In this case, in the limit we face a PDE on a two-dimensional region coupled with a PDE on a one-dimensional surface (Thm. \ref{main2}). Interestingly, here also one of permeability coefficients becomes an integral part of the main equation and describes jumps from the one-dimensional surface to the two-dimensional region.  

Moreover, a diffusion in a thin layer may be accompanied by a very fast diffusion in the bordering `thick' two-dimensional region (see Figure \ref{fig4} (c)). Then the limit equation involves a PDE on a surface coupled with an ODE: the fast diffusion averages out the concentration in the two-dimensional region, and thus, at any particular time, the concentration may be described by a single real number. Again, permeability coefficients become jump intensities between points of the surface and the isolated point formed by lumping all the points of the `thick' two-dimensional region (see Thm. \ref{main3}).

\subsection{Concluding remarks (for the introduction)}
All the main results are expressed as convergence theorems for related Feller semigroups, and thus, as in the Trotter–Sova–Kurtz–Mackevi\u cius Theorem (see e.g. \cite{kallenberg} p. 385),  may be interpreted in terms of a weak convergence of the stochastic processes involved.  See also \cite{thinaccepted} for a discussion of relations between the  results of the type obtained here and (a) the Conway--Hoff--Smoller homogenization principle (see \cite{knigaz,chs,smoller}), (b) the Freidlin--Wentzel averaging principle (see \cite{fw,fwbook,freidlin}), (c) the averaging principle discussed in \cite{bobmor,nagrafach,zmarkusem}, and (d) the theory of thin layer approximation originating from the seminal paper of Hale and Raugel \cite{hale}.  In particular, transmission conditions, which are the main subject of the present paper, seem to play no role in either the 
Conway--Hoff--Smoller homogenization principle or the Freidlin--Wentzel averaging principle or in the 
existing literature on thin layer approximation. For a comprehensive review of the growing literature on the thin layer approximation see e.g. the upcoming \cite{brzezniakpraca}.

Finally, we mention that, even though we focus on (linear) semigroups, and thus on linear (diffusion) equations, semi-linear equations and in particular reaction-diffusion equations with Lipschitz continuous non-linearity may be treated as a direct application of the results obtained here: the main theorem of 
\cite{zmarkusem2b} says that convergence of the semigroups describing the linear parts of semi-linear equations implies convergence of the solutions to the entire semi-linear equations. The theory applies also to locally Lipschitz non-linearities provided that additional conditions of M\"uller type are satisfied (see e.g. \cite{knigaz,thinaccepted}). This is vital because it is these non-linearities that are responsible for describing important and interesting phenomena accompanying reaction-diffusion equations.  

The paper is organized as follows: in Section \ref{agt} we prove a master generation theorem for the semigroups featuring in the paper; in Sections \ref{sec:3}--\ref{sec:5} 
we present the main theorems devoted to the three scenarios of Section \ref{msg}; these theorems are then discussed in Section \ref{cr}. Section \ref{appendix} collects auxiliary one-dimensional generation results. 

\newcommand{\spacec}{C(\mc R_{r,R})}
\newcommand{\spacecr}{C(\mc R_{r,1+\gamma (1-r)})}
\section{A generation theorem}\label{agt}
Given $r\in(0,1)$ and $R>1$ consider the annulus in the $(x,y)$ plane where 
\[ r \le \sqrt {x^2+ y^2} \le R, \]
divided in two parts by a semi-permeable membrane of the form of the unit circle where $x^2+y^2 =1$ (see Figure \ref{fig2}).

In polar coordinates, say $(\rho, \phi)$, this region is the Cartesian product 
\[ \mc R_{r,R} = V_{r,R} \times [0,2\pi) \]
of \[V_{r,R} = [r,1-]\cup [1+,R]\] and $[0,2\pi)$. Alternatively (see Figure \ref{fig3}), $\mc R_{r,R}$ may be seen as a sum of two sub-rectangles:
\[ \mc R_{r,R} =  \mc R^-_r \cup \mc R^+_R \] 
where \[ \mc R^-_r = [r,1-] \times [0,2\pi) \mquad{ and } \mc R^+_R = [1+,R] \times [0,2\pi).\] The segments 
$\{1-\}\times [0,2\pi)\subset \mc R^-_r$ and $\{1+\}\times [0,2\pi)\subset \mc R^+_R$ represent the points `right below' and `right above' the membrane (we use notation similar to that introduced in Introduction for $p+$ and $p-$). Since $\mc R^-_r$ and $\mc R^+_R$ are disjoint, a function $v:\mc R_{r,R} \to \R$ is continuous iff it is continuous on either of these sub-rectangles (separately).  
\begin{center}
\begin{figure}[h]
\includegraphics{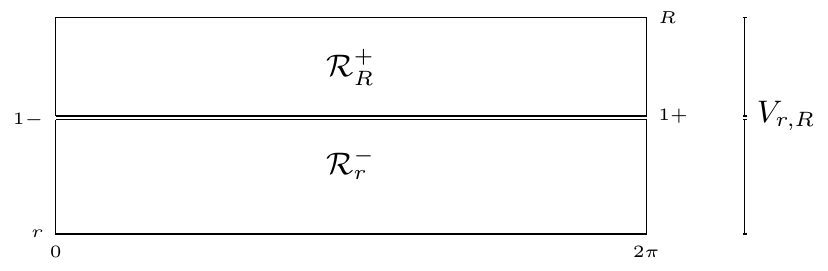}
 \caption{Two thin annuli separated by a semi-permeable membrane in polar coordinates.} \label{fig3}
\end{figure}
\end{center} 

\vspace{-0.5cm} To recall, in polar coordinates the Laplace operator is given by 
\begin{equation}\label{laplace} \Delta v(\rho, \phi) = \frac{\partial^2 v}{\partial \rho^2 }(\rho, \phi)  + \frac 1 \rho \frac{\partial v}{\partial \rho } (\rho, \phi) +  \frac 1 {\rho^2} \frac{\partial^2 v}{\partial \phi^2 } (\rho, \phi). \end{equation}  
To find a realization of this operator that will suit our purposes, we will work in the space $\spacec$  of continuous functions $v$ on $\mc R_{r,R}$ such that 
\[ v(\rho, 0) = v(\rho, 2\pi)\coloneqq \begin{cases} \lim_{\mc R^-_r\ni (\rho',\phi) \to (\rho, 2\pi)} v(\rho',\phi),& \rho \in [r,1-],\\
\lim_{\mc R^+_R \ni (\rho',\phi) \to (\rho, 2\pi)} v(\rho',\phi),& \rho \in [1+,R];\end{cases} \]
(existence of the limit featuring here is part of the characterization of $v\in \spacec$). The latter space, when equipped with the supremum norm, may be identified with the injective tensor product $C(V_{r,R}) \otimes_\eps \CP$ (see e.g. \cite{ryan}) where $C(V_{r,R})$ is the space of continuous functions on $V_{r,R}$, and $\CP$ is the space of continuous functions $g $ on $[0,2\pi]$ such that $g(0)=g(2\pi)$. This is to say that (a) the supremum norm of $\spacec$ coincides with the tensor injective norm inherited from $C(V_{r,R})$ and $\CP$ and (b) each member of $\spacec$ may be approximated with arbitrary accuracy by linear combinations of simple tensors, i.e., of vectors of the form $v=f \otimes g$:
\[ v(\rho, \phi) = f(\rho) g(\phi), \qquad f \in C(V_{r,R}), g \in \CP, \rho \in V_{r,R}, \phi \in [0,2\pi]. \]

We are now ready to present the main generation theorem of this section. 

\begin{thm} \label{mastergen} Let $\alpha,\beta\ge 0$ and $\kappa>0$ be given constants. Furthermore, let $\mc A\coloneqq \mc A_{\alpha,\beta,\kappa, r,R}$ be the operator in $\spacec$ given by 
\[ \mc Av(\rho, \phi) = \begin{cases} 
  \frac{\partial^2 v}{\partial \rho^2 }(\rho, \phi)  + \frac 1 \rho \frac{\partial v}{\partial \rho } (\rho, \phi) +  \frac 1 {\rho^2} \frac{\partial^2 v}{\partial \phi^2 } (\rho, \phi), & (\rho, \phi )\in \mc R^+_R, \\
  \kappa \left [ \frac{\partial^2 v}{\partial \rho^2 }(\rho, \phi)  + \frac 1 \rho \frac{\partial v}{\partial \rho } (\rho, \phi) +  \frac 1 {\rho^2} \frac{\partial^2 v}{\partial \phi^2 } (\rho, \phi)\right ], & (\rho, \phi )\in \mc R^-_r ,
\end{cases}
\]
for all $v$ of class $C^2$ (in either of $\mc R^+_R$ and $\mc R^-_r$, separately), such that 
\begin{equation}
\mc A v(\rho,0)=\mc A v (\rho,2\pi), \qquad \rho \in V_{r,R},\label{brzegi}
\end{equation}
and
\begin{align}
\frac{\partial v}{\partial \rho } (1+,\phi) & = \alpha [v(1+,\phi) - v(1-,\phi)], 
\label{war-unki}\\
\frac{\partial v}{\partial \rho } (1-,\phi) & = \beta [v(1+,\phi) - v(1-,\phi)], \nonumber 
\\ \frac{\partial v}{\partial \rho } (r,\phi)&= \frac{\partial v}{\partial \rho } (R,\phi)=0, \nonumber \qquad \phi \in [0,2\pi]. \end{align}
Then, $\mc A$ is closable, and its closure generates a conservative Feller semigroup in $\spacec$.  
\end{thm}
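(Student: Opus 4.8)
The plan is to reduce the two‑dimensional problem to a tensor‑product assertion and then invoke one‑dimensional generation results together with a perturbation argument. First I would separate variables: since $\spacec \cong C(V_{r,R}) \otimes_\eps \CP$, I would look for the generator on simple tensors $v = f\otimes g$. The operator $\mc A$ does not literally factor through the tensor product because of the $\rho^{-2}\partial_\phi^2$ term, but the angular part only ever enters through the operator $g\mapsto g''$ on $\CP$, which generates the (periodic) heat semigroup; so the natural framework is that of operator matrices / sums of commuting generators acting on a tensor product, exactly as in \cite{ryan} and the semigroup‑tensor literature. Concretely, write $\mc A = \mc A_0 + \mc B$, where $\mc A_0$ carries the radial second‑order part with its boundary and transmission conditions \eqref{war-unki}, and $\mc B$ carries the first‑order radial drift $\tfrac1\rho\partial_\rho$ together with the angular term $\tfrac1{\rho^2}\partial_\phi^2$ (appropriately weighted by $\kappa$ on $\mc R^-_r$).

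Second, I would establish generation for the "principal" part $\mc A_0$. Here the idea is to freeze $\phi$: for each fixed $\phi$, $\mc A_0$ acts on functions of $\rho\in V_{r,R}=[r,1-]\cup[1+,R]$ as a one‑dimensional second‑order operator with Neumann conditions at $r$ and $R$ and the transmission conditions \eqref{war-unki} at the membrane; by the one‑dimensional generation results collected in the Appendix (Section \ref{appendix}), the closure of this radial operator generates a conservative Feller semigroup on $C(V_{r,R})$. Tensoring this semigroup with the periodic heat semigroup $\sem{\kappa^{\sharp}\,\partial_\phi^2}$ on $\CP$ (the two semigroups act on different factors and hence commute) yields, via the standard theory of injective tensor products of $C_0$‑semigroups, a strongly continuous — indeed conservative Feller — semigroup on $\spacec$ whose generator is the closure of an operator extending $\mc A_0$ plus the leading angular term. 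The compatibility condition \eqref{brzegi} is precisely what guarantees that $\mc A v$ again lies in $\spacec$, i.e. that the generators of the two factors are compatible on the joint domain of $C^2$ functions, so the tensor construction stays inside $\spacec$.

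Third, I would treat the remaining pieces — the drift $\tfrac1\rho\partial_\rho$ and the difference between $\tfrac1{\rho^2}\partial_\phi^2$ and whatever constant‑coefficient angular term was absorbed into the tensor factor — as a perturbation. These are lower‑order (the drift) or lower‑order‑relative‑to‑the‑angular‑Laplacian (the $\rho$‑dependence of the coefficient of $\partial_\phi^2$) operators; on the core of $C^2$ functions they are relatively bounded with respect to $\mc A_0$ with arbitrarily small bound, by standard interpolation estimates on the compact set $V_{r,R}$ where $\rho$ is bounded away from $0$ and $\infty$. Since the perturbation does not affect the constant function $1$ (drift and angular derivatives kill constants) and does not spoil the positive maximum principle, I would conclude by the Hille–Yosida–Feller–Phillips–Miyadera machinery (in its Feller form recalled in Section \ref{appendix}): one checks the positive maximum principle directly on the full operator $\mc A$ and the range condition via the perturbation series, obtaining that the closure of $\mc A$ generates a conservative Feller semigroup. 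Closability itself follows once the positive maximum principle is verified, since an operator satisfying it on a dense domain is automatically closable.

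The main obstacle, I expect, is the interaction at the membrane between the transmission conditions \eqref{war-unki} and the tensor/perturbation decomposition: one must make sure that the core of functions which are $C^2$ on each sub‑rectangle \emph{and} satisfy \eqref{brzegi}–\eqref{war-unki} is large enough to serve as a core for the radial generator (so that the Appendix results genuinely apply after freezing $\phi$) and simultaneously dense in $\spacec$, while being preserved by the commuting angular semigroup. In other words, the delicate point is the simultaneous verification of the range condition $(\lambda - \mc A)D(\mc A)$ dense in $\spacec$ \emph{together with} the boundary/transmission conditions — effectively solving the resolvent equation $\lambda v - \mc A v = w$ with the mixed Neumann/transmission data — which I would do by solving the radial two‑point problem explicitly (via the usual hyperbolic‑function fundamental solutions) for each angular Fourier mode and summing, exploiting that the membrane conditions \eqref{war-unki} define an accretive coupling of the two radial pieces.
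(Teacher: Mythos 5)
Your overall architecture (tensor product of a one--dimensional radial Feller generator with the periodic heat semigroup on $\CP$, plus a perturbation step, plus the maximum principle for closability) is close in spirit to the paper's, but there is a genuine gap at the perturbation step, and it is precisely the point where the paper does something different. You propose to take as principal part a constant--coefficient tensor sum $\mc A_0 + c\,\partial_\phi^2$ and to absorb the drift $\tfrac1\rho\partial_\rho$ \emph{and} the discrepancy $\bigl(\tfrac1{\rho^2}-c\bigr)\partial_\phi^2$ as relatively bounded perturbations with arbitrarily small bound. For the drift this is plausible (Landau--Kolmogorov on a compact interval), but the term $\bigl(\tfrac1{\rho^2}-c\bigr)\partial_\phi^2$ is a \emph{second--order} operator in $\phi$ with a variable coefficient whose oscillation over $[r,R]$ is of order one for general $r,R$; it is of the same order as the angular part of your principal operator, and no interpolation inequality makes its relative bound small. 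Kato--type perturbation of m-dissipative operators then does not apply, and Phillips' theorem requires boundedness, which certainly fails here. So the argument as written does not close.

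The paper sidesteps this entirely by a multiplicative, not additive, reduction: the radial factor is taken to be $A^I f=\chi^2 f''+\chi f'$ (the image of the plain second derivative under the logarithmic substitution $\rho=\e^{x}$, see Lemmas \ref{lem1}--\ref{lem2}), so that on simple tensors
\begin{equation*}
\mathfrak A v=(\chi^2 f''+\chi f')\otimes g+f\otimes g''=\chi^2\,\mc A v ,
\end{equation*}
i.e.\ $\chi^2\mc A$ is an \emph{exact} tensor sum with a constant angular coefficient --- no remainder to perturb away. Generation for $\mc A$ then follows from Dorroh's multiplicative perturbation theorem (division by the positive function $\chi^2$, which is bounded away from $0$ on $V_{r,R}$), and the final identification $\overline{\mc A}=\overline{\mc A^0}$ uses the maximum principle exactly as you suggest. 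To repair your proof you would need either this change of variables / Dorroh device, or a genuinely different verification of the range condition for the full variable--coefficient operator (your closing remark about solving the resolvent equation mode by mode in $\phi$ could in principle do this, but summing Fourier modes in the supremum norm of $\spacec$ is itself nontrivial and is not developed).
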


For the proof of this theorem, we need information on tensor product semigroups and on the Dorroh multiplicative perturbation theorem. Concerning the second of these (see e.g.  \cite{dorroh,engel}), in the simple form needed in the case of our interest, Dorroh's theorem says that if $S$ is a compact space, an operator $A$ is the generator of a Feller semigroup in $C(S)$, and $h\in C(S)$ is a positive function, then $f \mapsto h\, Af$ with domain $\dom{A}$ is a generator also. The stochastic process related to this new generator is a time-changed process of the operator $A$ (see \cite{convex}, \cite[Chapter 49]{knigaz} and \cite[p. 278]{rogers} for more details and more bibliography). 

Turning to the information on tensor product semigroups, suppose that $\sem{A}$ and $\sem{B}$ are two strongly continuous semigroups of contractions, generated by $A$ and $B$, in $C(V_{r,R})$ and $\CP$, respectively. Then, for $f \in C(V_{r,R})$ and $g \in \CP $, we may define 
\[ S(t) (f \otimes g) = \left (\e^{tA } f\right ) \otimes \left (\e^{tB } g \right ), \qquad t \ge 0. \] 
Since the set of linear combinations of simple tensors is dense in the space $\spacec$,
and the norm in $\spacec$ is the injective tensor norm inherited from $C(V_{r,R})$ and $\CP$, operators $S(t)$ may be extended to contractions defined on the the entire $\spacec$ (see \cite{ryan}). Moreover (see \cite[pp. 21-24]{nagel}), they form a strongly continuous semigroup; the latter semigroup is often termed the injective tensor product semigroup and  denoted \semt{A}{B}.  It is easy to see that if $\sem{A}$ and $\sem{B}$ are conservative Feller semigroups, then so is $\semt{A}{B}.$

A direct calculation shows that for $f\in D(A) $ and $g \in D(B)$, the function $v= g \otimes f$ belongs to the domain $D(\mathfrak A)$ of the generator $\mathfrak A$ of this semigroup, and 
\begin{equation}\label{tensorg} \mathfrak A v = (A f) \otimes  g + f \otimes (B g). \end{equation} 
It can be proven also that the set of linear combinations of such tensors is a core for $\mathfrak A$ (see for example Proposition at page 23 in  \cite{nagel}). 
The latter statement means that the closure of $\mathfrak A$ restricted to such linear combinations is the entire $\mathfrak A.$ Because of \eqref{tensorg}, the generator $\mathfrak A$ is often written as 
\[ \mathfrak A = A\otimes I_{C(V_{r,R})} + I_{\CP} \otimes B, \]
where $I_X$ is the identity operator in a Banach space $X$. 

\begin{proof}[Proof of Theorem \ref{mastergen}]
Let $B=\frac {\ud^2}{\ud \phi^2}$ be the operator in $\CP$ with domain composed of twice continuously differentiable functions $g$ such that $g'' \in \CP$. Then $B$ is the generator of a conservative Feller semigroup in this space. Moreover, let $A^I= A^I_{\alpha,\beta,\kappa,r,R}$ be the generator of the Feller semigroup described in Lemma \ref{lem2} and Proposition \ref{prop1} (in Appendix), and let $\mathfrak A$ be the generator of the tensor product semigroup $\semt{A^I}{B}$. 

For $v=f\otimes g$, where $f\in \dom{A^I}$ and $g\in \dom{B}$, 
\begin{equation} \mathfrak Av = (\chi^2 f'' + \chi f')\otimes g + f \otimes g''= \chi^2 \left (\frac{\partial^2 v}{\partial \rho^2} + \frac 1\chi \frac{\partial v}{\partial \rho} + \frac 1{\chi^2} \frac{\partial ^2 v}{\partial \phi^2} \right ) \text{ on } \mc R_R^+ \label{gwiazdka}
\end{equation}
and on $\mc R_r^-$ the expression must be multiplied by $\kappa$; in the right-most term of this formula,  with a slight abuse of notation, $\chi \in C(V_{r,R})$ defined in Lemma \ref{lem2} is identified with $\chi \otimes 1_{[0,2\pi]}\in \spacec$, so that $\chi (\rho,\phi) = \rho$. Moreover, since $\frac{\partial v}{\partial \rho} = f'\otimes g$ and $f$ satisfies \eqref{bandtdlar}, conditions \eqref{war-unki} are met by $v$. The assumption $g \in \dom{B}$ implies similarly that $\mathfrak Av$ calculated above belongs to $\spacec$. Thus,  \eqref{gwiazdka} together with its counterpart in $\mc R_r^-$ reveals that $v$ belongs to $\dom{\mc A}$ and 
\( \mathfrak A v = \chi^2 \mc A v .\)
This formula extends to the set $\mc D$ of all linear combinations of simple tensors (with factors in $\dom{A}$ and $\dom{B}$). Hence, denoting $\mc A^0$ the operator $\mc A$ restricted to $\mc D$, we obtain
\begin{equation}\label{zwiazek} \mathfrak A v = \chi^2 \mc A^0 v , \qquad v \in \mc D=\dom{\mc A^0}.\end{equation}

Next, we note that $\chi$ is separated from $0$. It follows that for $v_n \in \mc D$, sequences $\jcg{v_n}$  and $\jcg{\mathfrak A v_n}$ converge iff so do $\jcg{v_n}$  and $\jcg{\mc A^0 v_n}$, and then $\gra \mathfrak Av_n = \chi^2 \gra \mc A^0 v_n$. As a result, since $\mathfrak A$ is closed, $\mathcal A^0$ is closable, and since $\mc D$ is a core for $\mathfrak A$, the domain of the closure $\overline{\mc A^0}$ coincides with $\dom{\mathfrak A}$, and we have 
\[ \mathfrak A v = \chi^2 \overline{\mc A^0} v , \qquad v \in \dom{\overline{\mc A^0}}=\dom{\mathfrak A}.\]
By Dorroh's theorem, since $\mathfrak A$ is the generator of a conservative Feller semigroup, so is $\overline{\mc A^0}.$ 

We are left with clarifying the relation between $\mc A$ and $\mc A^0$: we want to show that $\mc A$ is closable and $\overline {\mc A}= \overline {\mc A^0}.$ To this end, we note first that, arguing as in our Appendix, one may show that $\mc A$ satisfies the maximum principle. Therefore (see \cite[Lemma 2.1 p. 165]{ethier}) 
\begin{equation}\label{wn-mp} 
\|\lam v - \mc A v \| \ge \lam \|v\|, \qquad v\in \dom{\mc A}.\end{equation}
Since $\mc A$ is densely defined, it follows that (see \cite[Lemma 2.11 p. 16]{ethier}) $\mc A$ is closable and then \eqref{wn-mp} can be extended to all $v\in  \dom{\overline {\mc A}}$ (with $\mc A v$ replaced by $\overline{\mc A}v$. 

Clearly $\overline {\mc A^0} \subset \overline{\mc A}$ since the operator $\mc A$ extends $\mc A^0.$  
To show the opposite inclusion, take $v \in \dom{\overline{\mc A}}$ and consider $\lam v - \overline{\mc A}v\in C(\mc R)$ for some $\lam >0.$ Since $\overline{\mc A^0}$ is a Feller generator, there is a $v' \in \dom{\overline{\mc A^0}}$ such that $\lam v - \overline{\mc A^0}v' = \lam v - \overline{\mc A}v $ and this means that $\lam (v-v') - \overline {\mc A}(v-v') =0$. By \eqref{wn-mp} (extended to $v\in \dom{\overline{\mc A}}$), however, this means that $v=v'$, and in particular $v$ belongs to $\overline {\mc A^0}.$   \end{proof}

\section{Two thin layers: convergence as $r,R\to 1$}\label{sec:3}

\begin{center}
\begin{figure}
\includegraphics[scale=1.2]{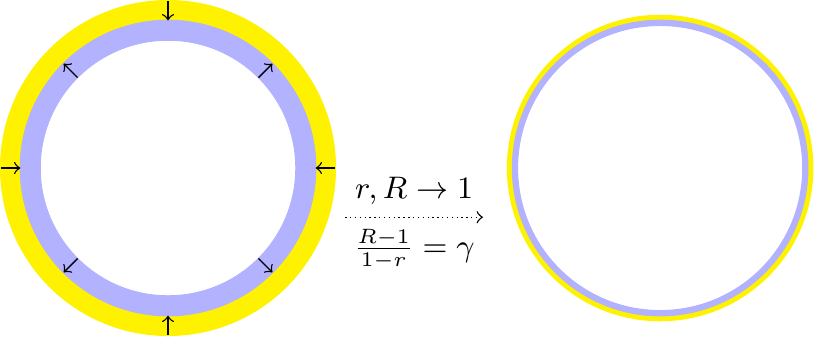}
 \caption{The approximating semigroups in Section \ref{sec:3} are defined in the spaces of continuous functions on the union of two (smaller and smaller) annuli; the limit semigroup  acts in the space of continuous functions on the union of the two sides of the unit circle.} \label{pstanow1}
\end{figure}
\end{center} 

\vspace{-0.3cm}

In this section, our goal is a thin layer convergence theorem for the semigroups obtained in Theorem \ref{mastergen}, when both radiuses converge to $1$: $r\to 1-$ and $R\to 1+$ (see  Figure \ref{pstanow1}). As discussed in \cite{dlajima} and \cite{thinaccepted}, for the limit to be non-trivial, transmission conditions \eqref{war-unki} need to be appropriately rescaled. Thus, we consider 
\[ \mc A_{(R-1)\alpha, (1-r)\beta,\kappa, r,R}. \]
Moreover, we assume that, while shrinking to $0$, the radial sizes of the `outer' and `inner' annuli are comparable: there is a constant $\gamma >0$ such that 
\begin{equation} \frac {R-1}{1-r} = \gamma .\label{rer}\end{equation}
(The same result may be obtained under the weaker condition  
\(\lim_{R\to 1+} (R-1) = \gamma \lim_{r\to 1-} (1-r) \), but, to simplify notations and analysis, we restrict ourselves to assumption \eqref{rer}.) This leads to considering the semigroups generated by 
the closures of \[\mc B_r 
\coloneqq \mc A_{(1-r)\alpha\gamma, (1-r)\beta,\kappa, r,1+\gamma(1-r)}.\]

\subsection{Reduction to analysis in a reference space}\label{reduction}

We also need to take into account the fact that each operator 
$\mc B_{r}$, 
and the related semigroup, is defined in a different space (namely, in the space $\spacecr$). Hence, there is a need to find a single, common reference space, and to define isomorphic images of the operators and semigroups $\sem{\overline{\mc B_r}}$ in this  space. There are of course many possible choices of such a space; we consider
\[ C(\mc R) = C(V) \otimes_\eps \CP, \]
where \[ V= [0,1-] \cup [1+, 2].\] Each member $v$ of $C(\mc R)= C(\mc R^- \cup \mc R^+)$ is a continuous function  on either \[ \mc R^-\coloneqq [0,1-] \times [0,2\pi] \mquad{ and } \mc R^+ \coloneqq [1+,2]\times [0,2\pi],\] separately, such that $v(\varrho,2\pi)=v(\varrho, 0)$ for all $\varrho \in V$. For any $r\in (0,1)$, this space is isometrically isomorphic to $\spacecr$, and a sample isomorphism is \[J_{r}: C(\mc R) \to \spacecr\] given by  \[ J_r u(\rho, \phi) = u(\varrho_r(\rho), \phi ), \qquad  u \in C(\mc R), (\rho, \phi) \in \mc R_{r,1+\gamma (1-r)},\]
where $\varrho_r (\cdot) $ maps $V_{r,1+\gamma (1-r)}$ onto $V$ as follows
\[ \varrho _r(\rho) =\begin{cases}  
\frac {\rho - 1}{\gamma(1-r)}+1, &\rho \in [1+,1+\gamma(1-r)],\\
\frac {\rho - r}{1-r},&\rho \in [r,1-] 
. \end{cases} \]
The inverse is, analogously, given by \[J_{r}^{-1} v(\varrho, \phi) = v(\rho_r(\varrho), \phi)
\qquad  v \in \spacecr, (\varrho, \phi) \in \mc R,\]
where $\rho_r (\cdot) $ maps $V$ onto $V_{r,1+\gamma (1-r)}$ via
\[ \rho_r(\varrho) =\begin{cases} 1+ \gamma(1-r)(\varrho - 1), &\varrho \in [1+,2], \\  \varrho (1-r)+r, &\varrho \in [0,1-]. \end{cases} \]
Thus, we are lead to studying 
\[ \lim_{r\to 1-} J_r^{-1} \e^{t\overline{\mc B_r}} J_r .\]

\newcommand{\brj}{\mc C_r} 
 \newcommand{\bRj}{\mc C_R}

\begin{thm} \label{gen1} Fix $r\in (0,1)$ and let $\brj$ be  the isomorphic image of $\mc B_r $ in $C(\mc R)$ via $J_r$.  Then, $\dom{\brj}$ is the set of functions $u \in C(\mc R)$ that are twice continuously differentiable in either of the rectangles $\mc R^+$ and $\mc R^-$, separately, and satisfy 
\begin{align}
\frac{\partial u}{\partial \rho } (1+,\phi) & = (1-r)^2\alpha \gamma [u(1+,\phi) - u(1-,\phi)],\label{war-un-ki}\\
\frac{\partial u}{\partial \rho } (1-,\phi) & = (1-r)^2 \beta [u(1+,\phi) - u(1-,\phi)],  \nonumber 
\\ \frac{\partial u}{\partial \rho } (0,\phi)&= \frac{\partial u}{\partial \rho } (2,\phi)=0, \nonumber \qquad \phi \in [0,2\pi]. \end{align}
Moreover, for $u\in \dom{\brj}$,  
\[ \brj u(\varrho, \phi)=
\begin{cases} 
\frac {1}{\gamma^2(1-r)^2}  \frac{\partial^2 u}{\partial \varrho^2 }(\varrho, \phi)  +  \frac {1}{\gamma \rho_r(\varrho) (1-r)}   \frac{\partial u}{\partial \varrho } (\varrho, \phi) +  \frac 1 {(\rho_r (\varrho))^2} \frac{\partial^2 u}{\partial \phi^2 } (\varrho, \phi),  & \\
 \frac {\kappa}{(1-r)^2}  \frac{\partial^2 u}{\partial \varrho^2 }(\varrho, \phi)  +  \frac {\kappa}{\rho_r(\varrho) (1-r)}   \frac{\partial u}{\partial \varrho } (\varrho, \phi) +  \frac \kappa {(\rho _r(\varrho))^2} \frac{\partial^2 u}{\partial \phi^2 } (\varrho, \phi), & \\
\end{cases}
\]
depending on whether $(\varrho, \phi )\in \mc R^+ $ (the upper line) or $(\varrho, \phi )\in \mc R^-$ (the lower line). 
\end{thm}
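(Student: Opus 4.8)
Since $J_r : C(\mc R)\to\spacecr$ is an isometric isomorphism realised by a piecewise affine change of the radial variable, the statement follows by transporting the operator $\mc B_r$ through it. The plan is to use that $\brj = J_r^{-1}\mc B_r J_r$ by definition, where $\mc B_r = \mc A_{(1-r)\alpha\gamma,(1-r)\beta,\kappa,r,1+\gamma(1-r)}$ is the operator furnished by Theorem \ref{mastergen} for this particular choice of parameters; hence $u\in\dom{\brj}$ precisely when $v\coloneqq J_r u$ lies in $\dom{\mc B_r}$, and in that case $\brj u = J_r^{-1}(\mc B_r v)$. Everything then reduces to rewriting, one at a time, each of the conditions describing $\dom{\mc B_r}$ in Theorem \ref{mastergen}, together with the formula for $\mc A v$, under the substitution $\rho=\rho_r(\varrho)$, $\varrho=\varrho_r(\rho)$.

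On the outer segment $\varrho_r$ is the affine bijection of $[1+,1+\gamma(1-r)]$ onto $[1+,2]$ with constant derivative $1/(\gamma(1-r))$, and on the inner segment the affine bijection of $[r,1-]$ onto $[0,1-]$ with constant derivative $1/(1-r)$; being a smooth diffeomorphism with non-vanishing derivative on each piece, it makes $v = J_r u$ of class $C^2$ on each of the two sub-rectangles of $\mc R_{r,1+\gamma(1-r)}$ exactly when $u$ is of class $C^2$ on the matching rectangle $\mc R^+$ or $\mc R^-$, and of course $v\in\spacecr$ iff $u\in C(\mc R)$. By the chain rule, on the outer rectangle $\partial_\rho v(\rho,\phi)=\tfrac1{\gamma(1-r)}\partial_\varrho u(\varrho_r(\rho),\phi)$, $\partial_\rho^2 v(\rho,\phi)=\tfrac1{\gamma^2(1-r)^2}\partial_\varrho^2 u(\varrho_r(\rho),\phi)$ and $\partial_\phi^2 v(\rho,\phi)=\partial_\phi^2 u(\varrho_r(\rho),\phi)$, with $\gamma(1-r)$ replaced by $1-r$ on the inner rectangle. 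Substituting these into the polar Laplacian \eqref{laplace}, multiplied by $\kappa$ on the inner rectangle, and evaluating at $\rho=\rho_r(\varrho)$ --- so that $\varrho_r(\rho)=\varrho$ and the coefficient $1/\rho$ becomes $1/\rho_r(\varrho)$ --- reproduces the two-line expression for $\brj u$ in the statement; and the transported form of \eqref{brzegi}, namely that $\mc A v$ again lie in $\spacecr$, is exactly the requirement that this $\brj u$ belong to $C(\mc R)$, a condition that is part of the description of $\dom{\brj}$.

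It remains to transport the boundary data. Since $\varrho_r$ sends $r\mapsto 0$, $1-\mapsto 1-$, $1+\mapsto 1+$ and $1+\gamma(1-r)\mapsto 2$, one has $v(1\pm,\phi)=u(1\pm,\phi)$, while the chain-rule formulae above give $\partial_\rho v(1+,\phi)=\tfrac1{\gamma(1-r)}\partial_\varrho u(1+,\phi)$, $\partial_\rho v(1-,\phi)=\tfrac1{1-r}\partial_\varrho u(1-,\phi)$, $\partial_\rho v(r,\phi)=\tfrac1{1-r}\partial_\varrho u(0,\phi)$ and $\partial_\rho v(1+\gamma(1-r),\phi)=\tfrac1{\gamma(1-r)}\partial_\varrho u(2,\phi)$. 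Inserting these into the conditions \eqref{war-unki} written for $\mc B_r$ --- that is, with $\alpha$, $\beta$, $R$ replaced by $(1-r)\alpha\gamma$, $(1-r)\beta$, $1+\gamma(1-r)$ --- and clearing the Jacobian factors turns the two transmission conditions into \eqref{war-un-ki} and the two no-flux conditions into $\partial_\varrho u(0,\phi)=\partial_\varrho u(2,\phi)=0$. This completes the identification of $\dom{\brj}$ and of $\brj$.

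I do not expect any essential obstacle: the argument is a routine conjugation of a second-order differential operator by a change of variables. The only steps calling for a little care are purely bookkeeping --- keeping track of the two distinct Jacobian constants $1/(\gamma(1-r))$ and $1/(1-r)$ produced by the two affine scalings inside $\varrho_r$, which is what fixes the exact powers of $1-r$ and of $\gamma$ in \eqref{war-un-ki} and in the coefficients of $\brj u$, and not losing sight of the fact that the transported condition \eqref{brzegi}, i.e. $\brj u\in C(\mc R)$, must be retained in the characterization of $\dom{\brj}$.
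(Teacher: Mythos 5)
Your argument is essentially identical to the paper's proof of Theorem \ref{gen1}: both transport $\mc B_r$ through the isometric isomorphism $J_r$, use the chain rule with the two piecewise-affine Jacobians $1/(\gamma(1-r))$ and $1/(1-r)$ to rewrite the polar Laplacian and the transmission and no-flux conditions, and obtain the converse inclusion by the symmetric computation. One bookkeeping remark: if you literally insert the first parameter $(1-r)\alpha\gamma$ of $\mc B_r$ into \eqref{war-unki} and then clear the Jacobian factor $\gamma(1-r)$, you arrive at $(1-r)^2\alpha\gamma^{2}$ rather than the $(1-r)^2\alpha\gamma$ appearing in \eqref{war-un-ki} --- but this factor-of-$\gamma$ discrepancy is present in the paper's own displayed computation as well, so it does not separate your route from theirs.
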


\begin{proof} By definition, if $u$ belongs to $\dom{\brj}$ then $v\coloneqq J_r u $ belongs to $\dom{\mc B_r}$, and in particular $v$ is twice continuously differentiable. Since $\rho_r(\cdot)$, as a linear function, is twice continuously differentiable as well, it is thus clear that also $u=J_r^{-1}v$ is twice continuously differentiable in either of $\mc R^+$ and $\mc R^-$. Moreover, 
\begin{align*}
\frac{\partial u}{\partial \varrho} (1+,\phi) &= (1-r)\gamma \frac{\partial v}{\partial \rho} (1+,\phi) = (1-r)^2  \alpha \gamma [v(1+,\phi) - v(1-,\phi)]\\&= (1-r)^2 \alpha \gamma [u(1+,\phi) - u(1-,\phi)], \qquad \phi \in [0,2\pi],
\end{align*} 
proving the first condition in \eqref{war-un-ki}; the remaining conditions are established similarly. Conversely, by symmetrical reasoning, if a $u$ is twice continuously differentiable and satisfies conditions \eqref{war-un-ki}, then its image $J_r u$ belongs to $\dom{\mc B_r}$. It follows that $\dom{\brj}$ is precisely the set of such functions $u$.  

Moreover, for such $u$ and for $(\rho, \phi) \in \mc R_r^-$,  $\mc B_r J_r u (\rho, \phi) $ equals
\[ \frac \kappa {(1-r)^2} \frac{\partial^2 u}{\partial \varrho^2} (\varrho_r (\rho), \phi) + 
\frac \kappa {\rho (1-r)} \frac{\partial u}{\partial \varrho} (\varrho_r (\rho), \phi)  + \frac \kappa {\rho^2} \frac{\partial^2 u}{\partial \varphi^2} (\varrho_r(\rho), \phi).\]
For  $(\rho, \phi) \in \mc R_{1+\gamma(1-r)}^+$ the formula is similar: the consecutive coefficients must be replaced by \[ \frac 1 {\gamma^2(1-r)^2}, \quad \frac 1 {\rho \gamma (1-r)} \mquad { and }  \frac 1 {\rho^2},\]
respectively. Since $\brj u = J_r^{-1}\mc B_r J_r u, $ this proves the rest of the theorem. 
 \end{proof}

Since the isomorphisms $J_r$ and $J_r^{-1}$ preserve the lattice structures of the spaces involved, and map $1_{\mc R}$ to $1_{\mc R_{r,1+\gamma (1-r)}}$ and \emph{vice versa}, combining Theorems \ref{mastergen}  and \ref{gen1} we see that for any $r\in (0,1)$, $\brj$ is closable and its closure is a Feller generator in $C(\mc R)$; the semigroup generated by $\overline{\brj}$ is $\left (J_r^{-1} \e^{\overline {\mc B_r}t} J_r, t\ge 0\right )_{t\ge 0}.$ Therefore, our task is that of studying
 \[ \lim_{r\to 1-}  \e^{t\overline{\brj}}\]
in $C(\mc R)$. 

\subsection{The underlying semigroup $\sem{\mc Q}$ and its properties}\label{wlasnosciq}
The following operator in $C(V)$ has an immediate bearing on the question of existence and form of this limit. Let $Qf(\varrho)=\gamma^{-2}f''(\varrho)$ for $\varrho \in [1+,2] $ and $Qf(\varrho)=\kappa f''(\varrho)$ for $\varrho \in [0,1-]$ on the domain composed of functions $f$ that are twice continuously differentiable in either of the subintervals forming $V$ and such that  $f'(0)=f'(1-)=f'(1+)=f'(2)=0$. A standard analysis shows that $Q$ is a conservative Feller generator. The related semigroup governs a Brownian motion on $V$ with different diffusion coefficients in each of the subintervals and  with reflecting barriers at $0,1-,1+$ and $2$; in particular, a traveler starting in $[0,1-]$ never enters $[1+,2]$ and \emph{vice versa}. Moreover, 
\begin{equation}\label{asyQ} \grat \e^{tQ}f = Pf, \qquad f \in C(V) \end{equation} 
where $Pf$ equals $\int_0^1 f$ or $\int_1^2 f$ on $[0,1-]$ and $[1+,2]$, respectively. 

The semigroup 
\[\left (\e^{t{Q}} \otimes_\varepsilon I \right )_{t \ge 0}= \semt{Q}{0}\]
where $I=I_{\CP}$ is the identity operator in $\CP$ and $0$ denotes the zero operator in this space, is a version of $\sem{Q}$ in $C(\mc R)$. Let \[\mc Q = Q\otimes I\] be the generator of this semigroup. Clearly, 
\begin{equation}\label{asymcQ} \grat \e^{t\mc Q}u = \mc P u, \qquad u \in C(\mc R) \end{equation} 
where \[ \mc P u (\varrho, \phi ) = \begin{cases} 
  \int_1^2 u(\varrho', \phi) \ud \varrho', &(\varrho, \phi) \in \mc R^+,\\
\int_0^1 u(\varrho', \phi) \ud \varrho', &(\varrho, \phi) \in \mc R^-. \end{cases}\]
We also recall that the set of $u$ of the form $u\coloneqq f \otimes g$ where 
 $f\in \dom{Q}$ and $g \in \CP$ is contained in $\dom{\mc Q}$ and is a core for $\mc Q$; moreover, if $u$ is of this form then  for $u_r\coloneqq f_r\otimes g$, where 
\[ f_r = f - (r-1)^2 (f(1+) - f(1-))\psi\] and 
\[ \psi (\varrho) = \begin{cases} 
\frac {\alpha \gamma}\pi (2-\varrho) \sin \pi \varrho , & \varrho \in [1+,2],\\
\frac {\beta}\pi \varrho \sin \pi \varrho ,& \varrho \in [0,1-],
\end{cases} \]
we have (see Remark \ref{onchi}, further down)
\begin{equation} \label{zwzq} u_r \in \dom{\brj}, \lim_{r\to 1-} u_r = u \mquad { and }  \lim_{r\to 1-} (1-r)^2\brj u_r = \mc Q u.\end{equation}

It is this connection between $\brj $'s and $\mc Q$, along with the property \eqref{asymcQ}, that allows inferring, under additional assumptions explained below, that the semigroups $\sem{\overline{\brj}}$ converge to a semigroup acting in the subspace of $C(\mc R)$, equal to the range of the projection $\mc P$:
\begin{equation}\label{spacesx} \x \coloneqq \text{Range}\, \mc P. \end{equation}  
More specifically, the singular perturbation theorem of T. G. Kurtz (\cite{ethier,kurtzper,kurtzapp} or \cite{knigaz}, Theorem 42.1) says that if, in this scenario, there is an operator $\mc O:\dom{\mc O}\to C(\mc R)$, with domain contained in $\x$ such that 
\begin{itemize} 
\item [(a)] for $u \in \dom{\mc O}$ there are $u_r \in \dom{\brj} $ such that $\grar u_r = u$ and $\grar \brj u_r = \mc O u,$
\item [(b)] $\mc {PO}$ is the generator of a semigroup in $\x$,
\end{itemize}
then 
\begin{equation}\label{kurtz} \grar \e^{t\overline{\brj} } u = \e^{t\mc{PO}} \mc P u , \qquad t>0, u \in C(\mc R),\end{equation}
and the limit is uniform in $t$ in compact subsets of $(0,\infty)$. For $u \in \x$ the relation is true also for $t=0$ and the limit is uniform in $t$ in compact subsets of $[0,\infty)$. 

\begin{rem} \label{onchi} The function $\psi$ defined above is twice continuously differentiable and satisfies 
\begin{align*} \psi '(0)&=\psi(1-)=\psi(1+)=\psi'(2) =0, \\ 
\psi'(1+)&=-\alpha \gamma \mquad {and} \psi'(1-) = -\beta.\end{align*}
Relations \eqref{zwzq} and the proof of Theorem \ref{main1} remain valid if this particular $\psi$ is replaced by any other function with these properties. We note that these properties imply:
\begin{equation}\label{propchi}\int_0^1 \psi'' = -\beta \mquad { and } \int_1^2 \psi'' =\alpha \gamma.  \end{equation}
  \end{rem}

\subsection{The space $\x$ and the limit semigroup}\label{spacex}
Examining the definition of $\mc P$, we conclude immediately that in our case $\x$ is the space of functions that, when restricted to $\mc R^+$ or $\mc R^-$, do not depend on $\varrho$. Thus, each member of $\x$ may be identified with a pair $\binom{g^+}{g^-}$ of members of $\CP$, \ie with two functions on the unit circle. 

For the operator $\mc O$ we take 
\[ \mc O \binom{g^+}{g^-} = \binom{\LB \, g^+}{\kappa \LB \, g^-} - \psi'' \otimes \binom{\gamma^{-2} (g^+-g^-)}{\kappa (g^+-g^-)}\]
with domain $\dom{\mc O} = \dom{\LB} \times \dom{\LB}.$ Here, $\dom{\LB}$  is the set of twice continuously differentiable functions $g\in \CP$ such that $g''\in \CP, $ and $\LB \, g = g''.$  The Laplace-Beltrami operator $\LB$ is the generator of the Feller semigroup governing Brownian motion on the unit circle. We note that, because of \eqref{propchi},
\[ \mc P \left (  \psi'' \otimes \binom{\gamma^{-2} (g^+-g^-)}{\kappa (g^+-g^-)}  \right )=\binom {\alpha \gamma^{-1} (g^+ - g^-)}{\kappa \beta(g^- - g^+)}.  
\]
Therefore, 
\begin{equation}\label{PO} \mc {PO} \binom{g^+}{g^-} = \binom{\LB \, g^+}{\kappa \LB\, g^-} +\begin{pmatrix}- \alpha \gamma^{-1} & \alpha\gamma^{-1}\\  \kappa \beta  & -\kappa \beta\end{pmatrix} \binom{g^+}{g^-}. \end{equation}

With the identification of $\x$ with $\CP\times \CP$ in mind, we have the following first main result of our paper. 

\begin{thm}\label{main1} We have 
\begin{equation*}\grar \e^{t\overline{\brj} } u = \e^{t\mc {PO}} \mc P u , \qquad t>0, u \in C(\mc R),\end{equation*} 
and the limit is uniform in $t$ in compact subsets of $(0,\infty)$. For $u \in \x$ the relation is true also for $t=0$ and the limit is uniform in $t$ in compact subsets of $[0,\infty)$.
\end{thm}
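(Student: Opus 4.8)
The plan is to verify the hypotheses (a) and (b) of Kurtz's singular perturbation theorem, as stated around equation \eqref{kurtz}, with $\mc O$ as defined in Section \ref{spacex}; once this is done, \eqref{kurtz} is precisely the assertion of the theorem, including the uniformity statements. We already know from the setup that $\sem{\overline{\brj}}$ is a conservative Feller semigroup in $C(\mc R)$ (combining Theorems \ref{mastergen} and \ref{gen1}), that $\mc Q = Q \otimes I$ is a Feller generator with $\grat \e^{t\mc Q} u = \mc P u$ for all $u \in C(\mc R)$, as in \eqref{asymcQ}, and that the approximation relation \eqref{zwzq} holds on a core for $\mc Q$; these are exactly the structural ingredients Kurtz's theorem requires besides (a) and (b).

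First I would check (a): given $u = \binom{g^+}{g^-} \in \dom{\mc O} = \dom{\LB}\times\dom{\LB}$, I need $u_r \in \dom{\brj}$ with $\grar u_r = u$ and $\grar \brj u_r = \mc O u$. The natural candidate, by analogy with \eqref{zwzq}, is
\[ u_r = \binom{g^+}{g^-} - (1-r)^2 \,\psi \otimes \binom{\gamma^{-2}(g^+ - g^-)}{\kappa(g^+ - g^-)}, \]
where I read the two components as living over $\mc R^+$ and $\mc R^-$ respectively (so the $\psi$ on $\mc R^+$ is $\varrho\mapsto\frac{\alpha\gamma}{\pi}(2-\varrho)\sin\pi\varrho$, and on $\mc R^-$ it is $\varrho\mapsto\frac{\beta}{\pi}\varrho\sin\pi\varrho$). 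By Remark \ref{onchi}, $\psi'(0)=\psi'(2)=0$ and $\psi(1\pm)=0$, $\psi'(1+)=-\alpha\gamma$, $\psi'(1-)=-\beta$; a direct computation using the explicit form of $\dom{\brj}$ and $\brj$ from Theorem \ref{gen1} shows these are exactly the boundary/transmission identities \eqref{war-un-ki} (the factor $(1-r)^2$ in $u_r$ is precisely what produces the $(1-r)^2\alpha\gamma$ and $(1-r)^2\beta$ coefficients there, since the $\varrho$-part of $u$ itself contributes nothing to $\partial u/\partial\varrho$ at the membrane). Hence $u_r\in\dom{\brj}$. Plainly $\grar u_r = u$. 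For $\grar\brj u_r = \mc O u$: applying $\brj$ from Theorem \ref{gen1}, the $\partial^2/\partial\varrho^2$ term hitting $(1-r)^2\psi\otimes(\cdots)$ carries a coefficient $\frac{1}{\gamma^2(1-r)^2}$ (resp.\ $\frac{\kappa}{(1-r)^2}$), so the $(1-r)^2$ cancels and leaves $-\psi''\otimes\binom{\gamma^{-2}(g^+-g^-)}{\kappa(g^+-g^-)}$; the $\partial^2/\partial\phi^2$ term hitting $\binom{g^+}{g^-}$ gives $\binom{g^+{}''}{\kappa g^-{}''}=\binom{\LB g^+}{\kappa\LB g^-}$ (here one uses $\rho_r(\varrho)\to1$ as $r\to1-$, uniformly in $\varrho$); and all remaining terms — the $\frac{1}{\gamma\rho_r(1-r)}\partial_\varrho$ terms, and the $\phi$-derivative terms acting on the $\psi$-correction — carry at least one surviving factor $(1-r)$ and vanish in the limit. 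This yields exactly $\mc O u$.

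Next, (b): I must show $\mc{PO}$ generates a semigroup in $\x\cong\CP\times\CP$. From \eqref{PO}, $\mc{PO} = \begin{pmatrix}\LB & 0\\ 0 & \kappa\LB\end{pmatrix} + \begin{pmatrix}-\alpha\gamma^{-1} & \alpha\gamma^{-1}\\ \kappa\beta & -\kappa\beta\end{pmatrix}$, i.e.\ a diagonal operator matrix built from the Feller generators $\LB$ and $\kappa\LB$ (the Laplace--Beltrami operator on the circle, and its positive multiple — both Feller generators on $\CP$, so their "diagonal sum" generates the product of the two Feller semigroups on $\CP\times\CP$) plus a bounded perturbation given by a matrix with nonnegative off-diagonal entries and zero row sums after rescaling. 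The bounded perturbation theorem then gives that $\mc{PO}$ is a generator; moreover, since the bounded part has nonnegative off-diagonal entries and the diagonal part generates a positive semigroup, the perturbed semigroup is positive, and one checks $\mc{PO}1_{\x}=0$ directly from the row-sum structure, so it is in fact a conservative Feller semigroup on $\x$ — but for Kurtz's theorem mere generation suffices.

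With (a) and (b) in hand, Kurtz's theorem (as quoted in the lines preceding \eqref{kurtz}) delivers \eqref{kurtz} verbatim, which is the statement of Theorem \ref{main1}, uniformities included. I expect the one genuinely delicate point to be the bookkeeping in step (a): one must be careful that \emph{every} term of $\brj u_r$ other than the two that survive really does carry a compensating power of $(1-r)$ and that the convergences are uniform on $\mc R$ (which they are, because $g^\pm$ and $\psi$ are $C^2$ on compact sets and $\rho_r(\varrho)\to1$ uniformly), and that the $u_r$ genuinely lie in $\dom{\brj}$ — i.e.\ that the Robin-type transmission conditions \eqref{war-un-ki} are matched \emph{exactly}, not just asymptotically, which is why the specific boundary data of $\psi$ listed in Remark \ref{onchi} are essential rather than incidental.
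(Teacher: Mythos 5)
Your overall strategy is exactly the paper's: verify conditions (a) and (b) of Kurtz's theorem, with (b) handled by writing $\mc{PO}$ as a bounded perturbation of the diagonal generator $\binom{g^+}{g^-}\mapsto\binom{\LB g^+}{\kappa\LB g^-}$ and invoking the Phillips Perturbation Theorem; that part of your argument is correct and matches the paper. However, your choice of the approximating vectors $u_r$ in step (a) is wrong, and the error is precisely at the point you yourself flag as delicate. You set
\[ u_r = \binom{g^+}{g^-} - (1-r)^2 \,\psi \otimes \binom{\gamma^{-2}(g^+ - g^-)}{\kappa(g^+ - g^-)}, \]
importing the factors $\gamma^{-2}$ and $\kappa$ from the definition of $\mc O$ into the correction term. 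But those factors are already supplied by the second-order coefficients $\tfrac{1}{\gamma^2(1-r)^2}$ and $\tfrac{\kappa}{(1-r)^2}$ of $\brj$, so you count them twice: applying $\brj$ to your $u_r$ and letting $r\to 1-$ yields $-\gamma^{-4}\psi''\otimes(g^+-g^-)$ on $\mc R^+$ and $-\kappa^2\psi''\otimes(g^+-g^-)$ on $\mc R^-$, not the terms $-\gamma^{-2}\psi''\otimes(g^+-g^-)$ and $-\kappa\,\psi''\otimes(g^+-g^-)$ appearing in $\mc O u$. Worse, your $u_r$ does not lie in $\dom{\brj}$ at all: since $\psi(1\pm)=0$ one has $u_r(1+,\phi)-u_r(1-,\phi)=g^+(\phi)-g^-(\phi)$, while $\partial u_r/\partial\varrho\,(1+,\phi)=(1-r)^2\alpha\gamma^{-1}(g^+-g^-)(\phi)$ and $\partial u_r/\partial\varrho\,(1-,\phi)=(1-r)^2\kappa\beta(g^+-g^-)(\phi)$, which violate \eqref{war-un-ki} by factors of $\gamma^{2}$ and $\kappa$ respectively (unless $\gamma=\kappa=1$).

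The fix is the choice the paper makes, and which is the genuine analogue of \eqref{zwzq}: take
\[ u_r = \binom{g^+}{g^-} - (1-r)^2 \,\psi \otimes \binom{g^+ - g^-}{g^+ - g^-}, \]
with the same scalar function $g^+-g^-$ in both components and all the asymmetry carried by $\psi$ alone (via $\psi'(1+)=-\alpha\gamma$, $\psi'(1-)=-\beta$). With this choice the transmission conditions \eqref{war-un-ki} are matched exactly, and the operator coefficients of $\brj$ produce precisely the weights $\gamma^{-2}$ and $\kappa$ required by $\mc O$. The rest of your bookkeeping — the lower-order terms carrying surviving powers of $(1-r)$, the uniform convergence $\rho_r(\varrho)\to 1$, and the application of Kurtz's theorem — then goes through as you describe.
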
 

\begin{proof} For $ u =\binom{g^+}{g^-}\in \dom{\mc Q}$, let 
\[ u_r =  \binom{g^+}{g^-} - (r-1)^2 \psi  \otimes \binom{g^+-g^-}{g^+-g^-}.\]
Then, because of the properties of $\psi$ listed in Remark \ref{onchi}, $u_r $ belongs to $\dom{\brj}.$ Moreover, $\grar u_r = u$ and $\grar \brj u_r = \mc O u$ (to see this note in particular that $\grar \rho_r(\varrho) =1$ uniformly in $\varrho \in V$).  Therefore, by Kurtz's Theorem, 
we are left with showing that $\mc{PO}$ is a generator. 

Since $\LB$ is a generator in $\CP$, the operator $\mc O'$ mapping $\binom{g^+}{g^-}$ to $\begin{pmatrix} {\LB \, g^+}\\{\kappa \LB \, g^-}\end{pmatrix}$ (with domain  $\dom{\LB} \times \dom{\LB}$) is a generator in $\CP\times \CP$. Hence, by the Phillips Perturbation Theorem (see e.g. \cite{kniga,engel}), $\mc {PO}$ is a generator also, as a bounded perturbation of $\mc O'$. This completes the proof. \end{proof}

\begin{rem}\label{parametry}For $\kappa=\gamma=1$, the operator $\mc{PO}$ agrees with the form of the limit equation announced in Introduction, see Equation \eqref{prawieconvex} there. In the approximating semigroups, parameter $\kappa$ accounts for differences in the `speed' of diffusion in the upper and lower parts of the layers, and its role in $\mc{PO}$ is similar. This parameter will be of greater importance in Section  \ref{sec:5}. 

The role of $\gamma$ is more interesting. As \eqref{rer} shows, for $\gamma >1$, the upper layer is thicker than the lower layer. Because of this, diffusing particles in the upper part touch the separating barrier less frequently than they would if the layers were of the same thickness, and therefore permeate through the barrier less often. This has a bearing on the limit equation: the jump intensity $\alpha \gamma^{-1} $ is smaller then $\alpha$ and the average time spent at the upper side of the membrane before jumping to the lower side of the membrane is longer. Analogously, for $\gamma <1$, the jump intensity is larger than $\alpha$, because in the approximating processes particles diffusing in the upper part hit the membrane more often than they would if $\gamma $ were $1$.  
\end{rem}

\section{A thin layer bordering a thick layer: convergence as $R\to 1$}\label{sec:4}
In this section, we consider the situation where the lower layer is thick and the upper layer is thin, and thus are interested in the limit of the semigroups constructed in Theorem \ref{mastergen}, as the outer radius $R\to 1+$ and the inner radius $r$ is kept constant (see  Figure \ref{pstanow2}). More specifically, we consider the limit as $R\to 1+$ of the semigroups generated by the closures of 
\[ \mc B_R\coloneqq \mc A_{(R-1)\alpha, \beta,\kappa, r,R}. \]
\begin{center}
\begin{figure}
\includegraphics[scale=1.2]{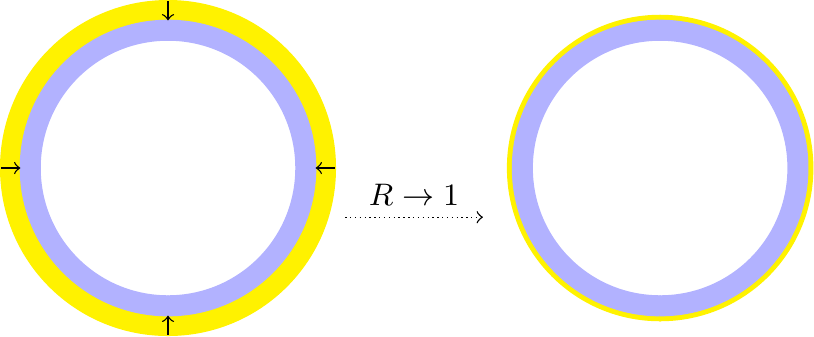}
 \caption{In Section \ref{sec:4} the limit semigroup  acts in the space of continuous functions on the union of the unit circle and an annulus.} \label{pstanow2}
\end{figure}
\end{center} 

\vspace{-0.3cm}

\subsection{Reduction to analysis in a reference space}\label{reductionp}
As in Section \ref{reduction}, we use a single reference space for all the semigroups $\sem{\mc B_R},$ $ R>1.$ This time we take
\[ C(\mc R) = C(V) \otimes_\eps \CP, \]
where \[ V= [r,1-] \cup [1+, 2];\]  recall that $r\in (0,1)$ is now fixed and treated as a parameter. Each member $v$ of $C(\mc R)= C(\mc R^- \cup \mc R^+)$ is a continuous function  on either of \[ \mc R^-\coloneqq [r,1-] \times [0,2\pi] \mquad{ and } \mc R^+ \coloneqq [1+,2]\times [0,2\pi],\] separately, such that $v(\varrho,2\pi)=v(\varrho, 0)$ for all $\varrho \in V$. For any $R>1$, this space is isometrically isomorphic to $\spacec$, and a sample isomorphism is \[J_{R}: C(\mc R) \to \spacec\] given by  \[ J_R u(\rho, \phi) = u(\varrho_R(\rho), \phi ), \qquad  u \in C(\mc R), (\rho, \phi) \in \mc R_{r,R},\]
where $\varrho_R (\cdot) $ maps $V_{r,R}$ onto $V$ as follows
\[ \varrho _R(\rho) =\begin{cases} 
\frac {\rho - 1}{R-1}+1, &\rho \in [1+,R], \\
\rho ,&\rho \in [r,1-].  \end{cases} \]
The inverse is given by \[J_{R}^{-1} v(\varrho, \phi) = v(\rho_R(\varrho), \phi)
\qquad  v \in \spacec, (\varrho, \phi) \in \mc R,\]
where $\rho_R (\cdot) $ maps $V$ onto $V_{r,R}$ via
\[ \rho_R(\varrho) =\begin{cases}  1+ (R-1)(\varrho - 1), &\varrho \in [1+,2],\\\varrho , &\varrho \in [r,1-]. \end{cases} \]
Thus, we are lead to studying 
\[ \graR J_R^{-1} \e^{t\overline{\mc B_R}} J_R .\]

\begin{thm} \label{gen2} Fix $R>1$ and let $\bRj$ be  the isomorphic image of $\mc B_R $ in $C(\mc R)$ via $J_R$.  Then, $\dom{\bRj}$ is the set of functions $u \in C(\mc R)$ that are twice continuously differentiable in either of the rectangles $\mc R^+$ and $\mc R^-$, separately, and satisfy 
\begin{align}
\frac{\partial u}{\partial \varrho } (1+,\phi) & = (R-1)^2\alpha [u(1+,\phi) - u(1-,\phi)],\label{war-un-ki-bis}\\
\frac{\partial u}{\partial \varrho } (1-,\phi) & = \beta [u(1+,\phi) - u(1-,\phi)],  \nonumber 
\\ \frac{\partial u}{\partial \varrho } (r,\phi)&= \frac{\partial u}{\partial \varrho } (2,\phi)=0, \nonumber \qquad \phi \in [0,2\pi]. \end{align}
Moreover, for $u\in \dom{\bRj}$,  
\[ \bRj u(\varrho, \phi)=
\begin{cases} 
\frac {1}{(R-1)^2}  \frac{\partial^2 u}{\partial \varrho^2 }(\varrho, \phi)  +  \frac {1}{\rho_R(\varrho) (R-1)}   \frac{\partial u}{\partial \varrho } (\varrho, \phi) +  \frac 1 {(\rho_R (\varrho))^2} \frac{\partial^2 u}{\partial \phi^2 } (\varrho, \phi),  & \\
 \kappa \left [ \frac{\partial^2 v}{\partial \varrho^2 }(\varrho, \phi)  + \frac 1 \varrho \frac{\partial v}{\partial \varrho } (\varrho, \phi) +  \frac 1 {\varrho^2} \frac{\partial^2 v}{\partial \phi^2 } (\varrho, \phi)\right ], & 
\end{cases}
\]
depending on whether $(\varrho, \phi )\in \mc R^+ $ (the upper line) or $(\varrho, \phi )\in \mc R^-$ (the lower line). 
\end{thm}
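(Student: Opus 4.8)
The plan is to mimic, almost verbatim, the proof of Theorem~\ref{gen1}, since the only change is that the inner radius $r$ is now a fixed parameter while the outer radius $R$ is the variable being transformed. First I would observe that, by definition, $u\in\dom{\bRj}$ precisely when $v\coloneqq J_Ru\in\dom{\mc B_R}$, and that $\mc B_R=\mc A_{(R-1)\alpha,\beta,\kappa,r,R}$, so $v$ is twice continuously differentiable in each of $\mc R_r^-$ and $\mc R_{R}^+$ and satisfies the boundary and transmission conditions of Theorem~\ref{mastergen} with $\alpha$ there replaced by $(R-1)\alpha$ and with the radii $r,R$. Since $\rho_R(\cdot)$ is affine on each subinterval of $V$ (identity on $[r,1-]$, and $\varrho\mapsto 1+(R-1)(\varrho-1)$ on $[1+,2]$), it is a $C^\infty$ diffeomorphism, so $u=J_R^{-1}v$ is twice continuously differentiable on each of $\mc R^-$ and $\mc R^+$ as well, and the characterization of $\dom{\bRj}$ reduces to translating each condition through the chain rule.

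Next I would carry out the chain-rule computation at the membrane. On the inner rectangle $\rho_R$ is the identity, so nothing changes: the conditions $\frac{\partial u}{\partial\varrho}(1-,\phi)=\beta[u(1+,\phi)-u(1-,\phi)]$ and $\frac{\partial u}{\partial\varrho}(r,\phi)=0$ are exactly the corresponding conditions on $v$ (note $u(1-,\phi)=v(1-,\phi)$ and $u(1+,\phi)=v(1+,\phi)$ since $\rho_R$ fixes both one-sided copies of $1$). On the outer rectangle $\frac{\partial u}{\partial\varrho}=(R-1)\frac{\partial v}{\partial\rho}$ (with $\frac{d\rho}{d\varrho}=R-1$), so the condition $\frac{\partial v}{\partial\rho}(1+,\phi)=(R-1)\alpha[v(1+,\phi)-v(1-,\phi)]$ becomes $\frac{\partial u}{\partial\varrho}(1+,\phi)=(R-1)^2\alpha[u(1+,\phi)-u(1-,\phi)]$, and $\frac{\partial v}{\partial\rho}(R,\phi)=0$ becomes $\frac{\partial u}{\partial\varrho}(2,\phi)=0$; this establishes \eqref{war-un-ki-bis}. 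The converse direction is the symmetric reasoning: any $C^2$ function $u$ on $\mc R^-\cup\mc R^+$ meeting \eqref{war-un-ki-bis} pushes forward under $J_R$ to a function in $\dom{\mc B_R}$, so $\dom{\bRj}$ is exactly the stated set.

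For the formula for $\bRj u$, I would use $\bRj u=J_R^{-1}\mc B_R J_R u$ together with the explicit form of $\mc A$ from Theorem~\ref{mastergen}. On the inner annulus $\mc R_r^-$ the map $\rho_R$ is the identity, so $\mc B_R J_R u$ at $(\rho,\phi)$ is literally $\kappa[\partial_\rho^2 v+\tfrac1\rho\partial_\rho v+\tfrac1{\rho^2}\partial_\phi^2 v]$ evaluated with $v=J_Ru$, and applying $J_R^{-1}$ just relabels $\rho$ as $\varrho$, giving the lower line (with $\varrho$ in place of $\rho$). On the outer annulus $\rho=\rho_R(\varrho)=1+(R-1)(\varrho-1)$, so $\partial_\rho=\tfrac1{R-1}\partial_\varrho$, $\partial_\rho^2=\tfrac1{(R-1)^2}\partial_\varrho^2$, and the coefficient of $\partial_\phi^2$ is $\rho_R(\varrho)^{-2}$; substituting these into $\partial_\rho^2 v+\tfrac1\rho\partial_\rho v+\tfrac1{\rho^2}\partial_\phi^2 v$ yields exactly the upper line. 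There is no real obstacle here: the argument is an exercise in the chain rule, identical in structure to Theorem~\ref{gen1}'s proof. The only point requiring a little care is bookkeeping the two one-sided copies $1-$ and $1+$ of the membrane radius and making sure that $J_R$ acts as the identity on the inner rectangle, so that the transmission conditions mix a transformed normal derivative on the $+$ side with an untransformed normal derivative on the $-$ side; I would state this explicitly to avoid confusion.

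\begin{proof} By definition, $u\in\dom{\bRj}$ if and only if $v\coloneqq J_Ru$ belongs to $\dom{\mc B_R}$, where $\mc B_R=\mc A_{(R-1)\alpha,\beta,\kappa,r,R}$. In particular, such a $v$ is twice continuously differentiable in either of $\mc R_r^-$ and $\mc R_R^+$ separately. Since $\rho_R(\cdot)$ is affine on each of the two subintervals forming $V$ (the identity on $[r,1-]$, and $\varrho\mapsto 1+(R-1)(\varrho-1)$ on $[1+,2]$), it is a smooth diffeomorphism, and hence $u=J_R^{-1}v$ is likewise twice continuously differentiable in either of $\mc R^+$ and $\mc R^-$. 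On the inner rectangle $\rho_R$ is the identity, so $u$ and $v$ coincide there; in particular $u(1-,\phi)=v(1-,\phi)$, while $u(1+,\phi)=v(1+,\phi)$ as well. On the outer rectangle $\frac{\partial u}{\partial\varrho}=(R-1)\frac{\partial v}{\partial\rho}$. Therefore the transmission condition $\frac{\partial v}{\partial\rho}(1+,\phi)=(R-1)\alpha[v(1+,\phi)-v(1-,\phi)]$ becomes
\[ \frac{\partial u}{\partial\varrho}(1+,\phi)=(R-1)^2\alpha[u(1+,\phi)-u(1-,\phi)],\qquad \phi\in[0,2\pi],\]
the condition $\frac{\partial v}{\partial\rho}(1-,\phi)=\beta[v(1+,\phi)-v(1-,\phi)]$ becomes $\frac{\partial u}{\partial\varrho}(1-,\phi)=\beta[u(1+,\phi)-u(1-,\phi)]$, and the Neumann conditions $\frac{\partial v}{\partial\rho}(r,\phi)=\frac{\partial v}{\partial\rho}(R,\phi)=0$ become $\frac{\partial u}{\partial\varrho}(r,\phi)=\frac{\partial u}{\partial\varrho}(2,\phi)=0$. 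This proves \eqref{war-un-ki-bis}. Conversely, by the same computations read backwards, if $u\in C(\mc R)$ is twice continuously differentiable in either rectangle and satisfies \eqref{war-un-ki-bis}, then $J_Ru$ belongs to $\dom{\mc B_R}$. Hence $\dom{\bRj}$ is precisely the stated set.

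It remains to compute $\bRj u=J_R^{-1}\mc B_R J_Ru$ for such $u$. For $(\rho,\phi)\in\mc R_r^-$ the map $\rho_R$ is the identity, so using the form of $\mc A$ from Theorem~\ref{mastergen},
\[ \mc B_R J_Ru(\rho,\phi)=\kappa\left[\frac{\partial^2 u}{\partial\varrho^2}(\varrho_R(\rho),\phi)+\frac1\rho\frac{\partial u}{\partial\varrho}(\varrho_R(\rho),\phi)+\frac1{\rho^2}\frac{\partial^2 u}{\partial\phi^2}(\varrho_R(\rho),\phi)\right],\]
and applying $J_R^{-1}$ (which on the inner rectangle merely substitutes $\varrho=\varrho_R(\rho)$, and $\rho=\varrho$ there) gives the lower line. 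For $(\rho,\phi)\in\mc R_R^+$ we have $\rho=\rho_R(\varrho)=1+(R-1)(\varrho-1)$, hence $\frac{\partial}{\partial\rho}=\frac1{R-1}\frac{\partial}{\partial\varrho}$ and $\frac{\partial^2}{\partial\rho^2}=\frac1{(R-1)^2}\frac{\partial^2}{\partial\varrho^2}$, so
\[ \frac{\partial^2 v}{\partial\rho^2}+\frac1\rho\frac{\partial v}{\partial\rho}+\frac1{\rho^2}\frac{\partial^2 v}{\partial\phi^2}=\frac1{(R-1)^2}\frac{\partial^2 u}{\partial\varrho^2}+\frac1{\rho_R(\varrho)(R-1)}\frac{\partial u}{\partial\varrho}+\frac1{(\rho_R(\varrho))^2}\frac{\partial^2 u}{\partial\phi^2},\]
which is the upper line. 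Since $\bRj u=J_R^{-1}\mc B_R J_Ru$, this completes the proof.\end{proof}
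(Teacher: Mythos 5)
Your proposal is correct and is precisely the argument the paper has in mind: the published proof of Theorem~\ref{gen2} simply states that it is ``quite the same as the proof of Theorem~\ref{gen1}'' and omits the details, and what you have written is exactly that chain-rule computation carried out for the map $\rho_R$ (identity on $[r,1-]$, affine with slope $R-1$ on $[1+,2]$), with the factors $\partial_\varrho u=(R-1)\partial_\rho v$ and the substitution $\rho=\rho_R(\varrho)$ handled correctly on each rectangle. No gaps.
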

\begin{proof} The proof is quite the same as the proof of Theorem \ref{gen1}, and thus we omit it.
\end{proof}
As in Section \ref{reduction},  combining Theorems \ref{mastergen}  and \ref{gen2} we  conclude that each $\bRj$ is closable and its closure is a Feller generator in $C(\mc R)$, and that the semigroup generated by $\overline{\bRj}$ is $\left (J_R^{-1} \e^{\overline {\mc B_R}t} J_R, t\ge 0\right )_{t\ge 0}.$ Therefore, our task reduces to studying
 \[ \graR  \e^{t\overline{\bRj}}\]
in $C(\mc R)$. 
\subsection{The underlying semigroup $\sem{\mc Q}$ and its properties}\label{wlasnosciqp}

In this section, as in Section \ref{wlasnosciq}, we prepare the ground for analysis based on Kurtz's Theorem. Let $Q$ be the operator in $C(V)$ with the domain composed of functions $f$ which when restricted to $[1+,2]$ are twice continuously differentiable and satisfy $f'(1)=f'(2)=0$, and let $Qf(\varrho)=f''(\varrho)$ for $\varrho \in [1+,2]$ and zero otherwise. This operator is the generator of the Feller semigroup governing the following process: in $[1+,2]$ the process is a Brownian motion with reflecting barriers at $1+$ and $2$; the process starting at $\varrho \in [r,1-]$ stays in this point for ever. From this description it is clear that 
\begin{equation}\label{asyQp} \grat \e^{tQ}f = Pf, \qquad f \in C(V) \end{equation} 
where $Pf$ equals $f$ or $\int_1^2f $ on $[r,1-]$ and $[1+,2]$, respectively.  

The semigroup \(\left (\e^{t{Q}} \otimes_\varepsilon I \right )_{t \ge 0}= \semt{Q}{0}\) generated by 
\[\mc Q = Q\otimes I\]
in $C(\mc R)$ is a version of $\sem{Q}$ in this space, and inherits its basic properties. In particular, we have  
 \begin{equation*}
 \grat \e^{t\mc Q}u = \mc P u, \qquad u \in C(\mc R) \end{equation*} 
where \[ \mc P u (\varrho, \phi ) = \begin{cases} 
  \int_1^2 u(\varrho', \phi) \ud \varrho', &(\varrho, \phi) \in \mc R^+,\\
u(\varrho, \phi), &(\varrho, \phi) \in \mc R^-. \end{cases}\]

The operator $\mc Q$ is related to $\bRj, R>1$ in the following way: 
The set of $u$ of the form $u\coloneqq f \otimes g$ where 
 $f\in \dom{Q}$ and $g \in \CP$ is contained in $\dom{\mc Q}$ and is a core for $\mc Q$. Moreover, if $u$ is of this form then  for $u_R\coloneqq f_R\otimes g$, where 
\[ f_R  (\varrho) = \begin{cases}f(\varrho)- ((f(1+) - f(1-))
\frac {(R-1)^2\alpha}\pi (2-\varrho) \sin \pi \varrho , & \varrho \in [1+,2],\\
h_R(\varrho) ,& \varrho \in [r,1-],
\end{cases} \]
and $h_R \in C^2[r,1-]$ is such that $h_R(1-) = f(1-),$ $h_R'(1-)=\beta (f(1+) - f(1-))$ and $\graR h_R = f$, we have (comp. Remark \ref{onchi})
\begin{equation} \label{zwzqp} u_R \in \dom{\bRj}, \graR u_R = u \mquad { and }  \graR (R-1)^2\bRj u_R = \mc Q u.\end{equation}

\subsection{The space $\x$ and the limit semigroup}\label{spacexp}
The space $\x$, equal to the range of the projection $\mc P$ defined in Section \ref{wlasnosciqp},
is easily seen to be the set of functions which, when restricted to $\mc R^+$, do not depend on $\varrho.$ Hence, it is convenient to identify  a member $u $ of $\x$ with a pair $\binom{g^+}{u^-}$ where $g^+ \in \CP$ is a function on the unit circle, and $u^-$ is a function on $\mc R^-.$  

Before continuing, we note that $C(\mc R)$ may be regarded as a Cartesian product of $C(\mc R^+)$ and $C(\mc R^-)$, and that $C(\mc R^+)$ in turn may be seen as the injective tensor product of $C[1+,2]$ and $C_p[0,2\pi]$. Moreover, for $u^-\in C(\mc R-)$, the function $\phi \mapsto u^-(1-,\phi)$ is a member of $C_p[0,2\pi]$; we denote it $u^-(1-,\cdot)$.

With these preparations, we are ready to define the operator $\mc O$. This time we take 
\[ \mc O \binom{g^+}{u^-} = \binom{\LB \, g^+-\psi'' \otimes (g^+-u^-(1-,\cdot))}{\kappa \Delta u^-}\]
where $\Delta$ is the Laplace operator \eqref{laplace} on $\mc R^-$ and $\psi\in C[1+,2]$ is given by $\psi(\varrho)= \frac {\alpha}{\pi} (2-\varrho)\sin \pi \varrho$; a pair $\binom{g^+}{u^-}$ belongs to the domain $\dom{\mc Q}$ iff $g^+ \in \dom{\LB}$ and  $u^-$  is a twice continuously  differentiable function on $\mc R^-$ related to $g^+$ as follows 
\begin{align}
\label{war-un-ki-}
\frac{\partial u^-}{\partial \varrho } (1-,\phi) = \beta [g^+(\phi) - u^-(1-,\phi)], \quad  \frac{\partial u^-}{\partial \varrho } (r,\phi)=0, \quad \phi \in [0,2\pi]. \end{align}  
Since $\psi'(2)=0$ and $\psi'(1+)=-\alpha$, we have $\int_1^2 \psi'' = \alpha$. Therefore,
\begin{equation}\label{POp} \mc {PO}  \binom{g^+}{u^-} = \binom{\LB \, g^+}{\kappa \Delta u^-}
 + \binom{\alpha u^-(1-,\cdot) -\alpha g^+}0.\end{equation}

\begin{thm}\label{main2} The operator $\mc {PO}$ is closable, and its closure generates a Feller semigroup in $\x$, seen as the space of functions on the union of the unit circle and $\mc R^-.$  Moreover, 
\begin{equation*}\graR \e^{t\overline{\bRj}} u = \e^{t\overline {\mc {PO}}} \mc P u , \qquad t>0, u \in C(\mc R),\end{equation*} 
and the limit is uniform in $t$ in compact subsets of $(0,\infty)$. For $u \in \x$ the relation is true also for $t=0$ and the limit is uniform in $t$ in compact subsets of $[0,\infty)$.
\end{thm}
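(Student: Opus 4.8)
The plan is to argue exactly as in the proof of Theorem \ref{main1}, via Kurtz's singular perturbation theorem, here in the variant in which the limit operator is only required to be closable with a generator closure, the conclusion \eqref{kurtz} then holding with $\e^{t\mc{PO}}$ replaced by $\e^{t\overline{\mc{PO}}}$ (see \cite{ethier,kurtzper,kurtzapp} or \cite{knigaz}, Theorem 42.1). Two things must be settled: the approximation condition (a) and the generation condition (b).

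For (a), given $u=\binom{g^+}{u^-}\in\dom{\mc O}$ I would take, with $\psi(\varrho)=\frac{\alpha}{\pi}(2-\varrho)\sin\pi\varrho$ on $[1+,2]$,
\[ u_R \coloneqq \binom{\,g^+-(R-1)^2\,\psi\otimes(g^+-u^-(1-,\cdot))\,}{u^-},\]
the $\mc R^-$-component being left unchanged since $\mc R^-$ does not vary with $R$. Because $\psi(1+)=\psi(2)=0$, $\psi'(1+)=-\alpha$ and $\psi'(2)=0$, a direct computation — entirely parallel to Remark \ref{onchi} and the proof of Theorem \ref{main1}, using the formula of Theorem \ref{gen2} and the fact that $\graR\rho_R(\varrho)=1$ uniformly on $[1,2]$ — shows $u_R\in\dom{\bRj}$, $\graR u_R=u$ and $\graR\bRj u_R=\mc O u$ in $C(\mc R)$. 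This part is routine.

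The substance is (b): that $\mc{PO}$ of \eqref{POp} is closable and that $\overline{\mc{PO}}$ is a conservative Feller generator on $\x$, regarded as the space of continuous functions on the disjoint union of the unit circle with $\mc R^-$. I would split $\mc{PO}=\mc O'+\mc L$, where $\mc L\binom{g^+}{u^-}=\binom{\alpha(u^-(1-,\cdot)-g^+)}{0}$ is bounded on $\x$ (with $\|\mc L\|\le 2\alpha$) and quasi-positive ($\mc L+cI\ge 0$ for $c\ge\alpha$), while $\mc O'\binom{g^+}{u^-}=\binom{\LB g^+}{\kappa\Delta u^-}$ carries the same domain as $\mc{PO}$, namely the interface relation \eqref{war-un-ki-}. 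This $\mc O'$ is block-triangular: to solve $(\lambda-\mc O')\binom{g^+}{u^-}=\binom{h^+}{h^-}$ one first sets $g^+=(\lambda-\LB)^{-1}h^+\in\dom{\LB}$, and then, with $g^+$ now known, one is left with the inhomogeneous Robin problem
\[ \lambda u^- - \kappa\Delta u^- = h^-,\qquad \frac{\partial u^-}{\partial\varrho}(1-,\cdot)+\beta u^-(1-,\cdot)=\beta g^+,\qquad \frac{\partial u^-}{\partial\varrho}(r,\cdot)=0\]
for $u^-$ alone. Granting that $\kappa\Delta$ on $\mc R^-=[r,1-]\times[0,2\pi]$ with Neumann condition at $\varrho=r$ and homogeneous Robin condition at $\varrho=1-$ generates a Feller semigroup on $C(\mc R^-)$ — handled by the same Dorroh-plus-tensor-product method as in Section \ref{agt}, tensoring the circle Laplacian with the appropriate one-dimensional radial generator provided by Section \ref{appendix} — and subtracting an appropriate smooth lift to make the Robin data homogeneous, one solves this problem for a dense set of data $(h^+,h^-)$; together with the positive maximum principle for $\mc O'$ (verified as in Section \ref{agt}), this shows that $\overline{\mc O'}$ is a conservative Feller generator, with $\|(\lambda-\overline{\mc O'})^{-1}\|\le\lambda^{-1}$. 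Since $\mc L$ is a bounded quasi-positive perturbation and $\mc{PO}\,1_{\mc R}=0$, the Phillips perturbation theorem then gives that $\overline{\mc{PO}}=\overline{\mc O'}+\mc L$ generates a conservative Feller semigroup on $\x$.

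With (a) and (b) established, Kurtz's theorem yields the asserted convergence and its uniformity. The main obstacle is (b), and within it the range condition for $\mc O'$ — the solvability, in the space of continuous functions, of the Robin-type boundary value problem on the \emph{thick} annulus $\mc R^-$ whose interface data is transported from the circle. This is precisely the point at which transmission condition \eqref{war-un-ki-} enters the limit generator, and it genuinely needs the one-dimensional results of Section \ref{appendix} (through the radial reduction) rather than an abstract perturbation argument. Alternatively one may avoid the splitting $\mc O'+\mc L$ by checking the positive maximum principle for $\mc{PO}$ directly — the only delicate points being the Neumann end $\varrho=r$ and the interface $\varrho=1-$, where the sign of $\frac{\partial u^-}{\partial\varrho}(1-,\cdot)$ is forced by \eqref{war-un-ki-} — which together with the range condition above yields the same conclusion.
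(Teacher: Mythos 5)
Your proof is correct in outline and, for the approximation step (a) and the appeal to Kurtz's theorem in its ``closable limit operator'' variant, it coincides with the paper's argument (your sign choice $g^+-(R-1)^2\psi\otimes(g^+-u^-(1-,\cdot))$ is in fact the one consistent with the definition of $\mc O$ and with conditions \eqref{war-un-ki-bis}; the $+$ appearing in the paper's displayed $u_R$ is evidently a slip). Where you genuinely diverge is in step (b), the generation of $\overline{\mc{PO}}$. You split additively, $\mc{PO}=\mc O'+\mc L$, prove that the block-triangular diffusion part $\mc O'$ (annulus Laplacian with the Robin/transmission condition \eqref{war-un-ki-} at $\varrho=1-$) generates by solving the associated Robin boundary value problem on $\mc R^-$, and then absorb the jump term $\mc L$ by Phillips. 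The paper instead proceeds multiplicatively and in one stroke: it takes the one-dimensional conservative Feller generator $A^I=A^I_{\alpha,\beta,\kappa,r}$ of Proposition \ref{prop2} on $C([r,1-]\cup\{1+\})$ --- which already packages the jump at $1+$, the condition $f'(1-)=\beta(f(1+)-f(1-))$, and the radial diffusion --- forms the injective tensor product generator $\mathfrak A$ of $\semt{A^I}{\LB}$, verifies $\mathfrak A u=\chi^2\,\mc{PO}\,u$ on simple tensors (note $\chi^2=1$ at $\varrho=1+$, so the jump term is untouched), and concludes by the same Dorroh-plus-closure argument as in Theorem \ref{mastergen}. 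The paper's route buys economy: it reuses Lemma \ref{lem3} and the argument of Section \ref{agt} verbatim and never has to solve an inhomogeneous Robin problem in two variables. Your route buys modularity (the jump term is cleanly separated as a bounded quasi-positive perturbation), but at a price you should make explicit: the generation of the Robin Laplacian on $C(\mc R^-)$ is \emph{not} literally supplied by Section \ref{appendix} (whose Lemma \ref{lem3} concerns the coupled operator on $[a,0-]\cup\{0+\}$, not a pure Robin condition on an interval), and since $\kappa\Delta$ itself does not tensor-factor --- only $\chi^2\kappa\Delta$ does --- your ``Dorroh-plus-tensor-product'' step still requires writing down and proving a new one-dimensional generation lemma for $\kappa(\chi^2 f''+\chi f')$ with $f'(r)=0$ and a Robin condition at $1-$, plus the lifting argument for the inhomogeneous boundary data $\beta g^+$. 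These are routine by the methods of the Appendix, but they are additional work that your sketch currently only gestures at; with them filled in, your argument is a complete and valid alternative proof.
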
 

\begin{proof}Given $\binom{g^+}{u^-} \in \dom{O}$, we define (for the $\psi$ introduced above)
\[ u_R= \binom {g^+ + (R-1)^2 \psi \otimes [g^+ - u^-(1-,\cdot)]}{u^-}.\]
Then $u_R \in \dom{\bRj}$ and 
\[ \graR u_R = \binom{g^+}{u^-}  \mquad { while } \graR \bRj u_R = \mc O \binom{g^+}{u^-} .\]
This means that condition (a) of Kurt's Theorem is satisfied (with obvious changes amounting to replacing $r$ by $R$). 

However, in this case, condition  (b) does not hold, since $\mc{PO}$ itself is not a generator; $\mc{PO}$ is closable, though, and its closure is a generator. Fortunately, Kurtz's Theorem applies to this situation also: If  $\mc {PO}$ is closable and its closure is a generator, then the thesis still holds, and the limit semigroup is $\sem{\overline{\mc {PO}}},$ as in our theorem. Thus, we are left with showing the first statement of our theorem.

Let $\mathfrak A$ be the generator of the Feller semigroup $\semt{A^I}{\LB}$ for $A^I=A^I_{\alpha,\beta,\kappa, r}$ defined in Proposition \ref{prop2}. For $u=f\otimes g$ where $f\in \dom{A^I}$ and $g\in \dom{\LB}$, 
\[ \mathfrak A u(\varrho,\phi) = [\kappa \varrho^2 f''(\varrho) + \kappa \varrho f'(\varrho)]g(\phi) + f(\varrho) g''(\phi) =\kappa \varrho^2 \Delta u(\varrho, \phi),\, (\varrho,\phi) \in \mc R^-, \]
and
\begin{align*} \mathfrak A u(1+,\phi) &= [-\alpha f(1+) +\alpha f(1-)] g(\phi) + f(1+) g''(\phi)\\&= \LB u(1+,\phi) - \alpha g^+(\phi) + \alpha u^-(1-,\phi), \qquad \phi \in [0,2\pi].\end{align*}
This means that $\mathfrak A u = \chi^2 \mc Au$ where $\chi \in C(V_r)$ is given by $\chi (\varrho)=\varrho, \varrho \in V_r.$ This relation may be extended to all linear combinations $\mc D$ of simple tensors with factors in $\dom{A^I}$ and $ \dom{\LB}$:
\[ \mathfrak A u = \chi^2 \mc Au, \qquad u \in \mc D. \]
This together with the fact that $\mathfrak A$ is a conservative Feller generator implies that that $\mc A$ is closable and its closure is a conservative Feller generator. We omit the details of this reasoning, since it is quite analogous to that presented in the proof of Theorem \ref{mastergen} (see the lines following formula \eqref{zwiazek}).
 \end{proof}

\section{A PDE coupled with an ODE as a master limit equation: convergence as $R\to 1$ and $\kappa \to \infty$}\label{sec:5}

In the last scenario of interest, a thin outer annulus communicates with a thick inner annulus where, however, diffusion is very fast (see  Figure \ref{pstanow3}). Hence, we consider the semigroups of Theorem \ref{mastergen} in the case where $R\to 1$ while $\kappa \to \infty$. If the limit is to be non-trivial, the rate at which $R\to 1$ must be comparable to the rate at which $\kappa \to \infty,$ and therefore we assume that there is a $\gamma >0$ such that 
\begin{equation}\label{kandR} \kappa (R-1)^2=\gamma. \end{equation}
We also need to rescale transmission conditions appropriately; this leads us to considering 
\[ \mc B_\kappa \coloneqq\mc A_{\alpha \sqrt{\gamma/ \kappa} ,\beta\kappa^{-1},\kappa,r,1+\sqrt{\gamma/\kappa}}.\]

\begin{center}
\begin{figure}
\includegraphics[scale=1.2]{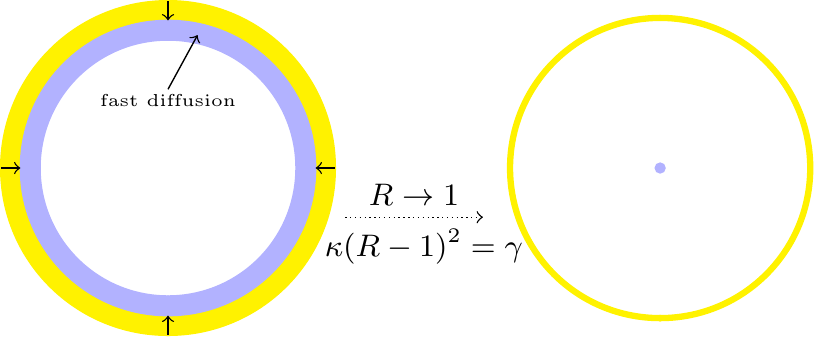}
 \caption{In Section \ref{sec:5} the limit semigroup  acts in the space of continuous functions on the union of the unit circle and a point.} \label{pstanow3}
\end{figure}
\end{center} 
\vspace{-0.3cm}

\subsection{Reduction to analysis in a reference space}\label{reductionb}
For a reference space, where isomorphic images of $\sem{\overline {\mc B_\kappa}}$ are defined, we take the space $C(\mc R)$ from Section \ref{reductionp}. The isomorphisms defined in the latter section work here as well, but we need to take into account the fact that now $R$ is seen as a function of $\kappa$. Hence, we consider the isomorphism $J_\kappa$ equal to the old $J_{R}$ with $R=1+ \sqrt{\gamma/\kappa}$, and are interested in the limit 
\[ \grak J_\kappa^{-1} \e^{\overline{\mc B_\kappa}} J_\kappa.\]
Theorem \ref{gen2} takes now the following form.

\begin{thm} \label{gen3} Fix $\kappa>0$ and let $\mc C_\kappa $ be  the isomorphic image of $\mc B_\kappa $ in $C(\mc R)$ via $J_\kappa$.  Then, $\dom{\mc C_\kappa}$ is the set of functions $u \in C(\mc R)$ that are twice continuously differentiable in either of the rectangles $\mc R^+$ and $\mc R^-$, separately, and satisfy 
\begin{align}
\frac{\partial u}{\partial \varrho } (1+,\phi) & = \alpha \gamma\kappa^{-1} [u(1+,\phi) - u(1-,\phi)],\label{war-un-ki-bis-bis}\\
\frac{\partial u}{\partial \varrho } (1-,\phi) & = \beta \kappa^{-1}[u(1+,\phi) - u(1-,\phi)],  \nonumber 
\\ \frac{\partial u}{\partial \varrho } (r,\phi)&= \frac{\partial u}{\partial \varrho } (2,\phi)=0, \nonumber \qquad \phi \in [0,2\pi]. \end{align}
Moreover, for $u\in \dom{\mc C_\kappa}$,  
\[ \mc C_\kappa u(\varrho, \phi)=
\begin{cases} 
\frac {\kappa}{\gamma}  \frac{\partial^2 u}{\partial \varrho^2 }(\varrho, \phi)  + \frac {\sqrt{\kappa/\gamma} }{\rho_R(\varrho)}   \frac{\partial u}{\partial \varrho } (\varrho, \phi) +  \frac 1 {(\rho_R (\varrho))^2} \frac{\partial^2 u}{\partial \phi^2 } (\varrho, \phi),  & (\varrho, \phi )\in \mc R^+, \\
 \kappa \left [ \frac{\partial^2 v}{\partial \varrho^2 }(\varrho, \phi)  + \frac 1 \varrho \frac{\partial v}{\partial \varrho } (\varrho, \phi) +  \frac 1 {\varrho^2} \frac{\partial^2 v}{\partial \phi^2 } (\varrho, \phi)\right ], & (\varrho, \phi )\in \mc R^-, 
\end{cases}
\]
 where 
\[ \rho_\kappa(\varrho) = 1+ \sqrt{\gamma/\kappa}(\varrho - 1), \qquad \varrho \in [1+,2].  \] 
\end{thm}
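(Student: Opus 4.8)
\textbf{Proof plan for Theorem \ref{gen3}.}
The plan is to mimic exactly the proof of Theorem \ref{gen1}, which itself is the template referred to in the proof of Theorem \ref{gen2}; the only novelty is bookkeeping the substitution $R = 1 + \sqrt{\gamma/\kappa}$ consistently, so that all occurrences of $R-1$ become $\sqrt{\gamma/\kappa}$ and $(R-1)^2$ becomes $\gamma/\kappa$. First I would observe that, by definition, $u \in \dom{\mc C_\kappa}$ precisely when $v \coloneqq J_\kappa u \in \dom{\mc B_\kappa} = \dom{\mc A_{\alpha\sqrt{\gamma/\kappa},\,\beta\kappa^{-1},\,\kappa,\,r,\,1+\sqrt{\gamma/\kappa}}}$. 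Since $\rho_\kappa(\cdot)$ is affine on $[1+,2]$ and the identity on $[r,1-]$, it is $C^\infty$, hence $u = J_\kappa^{-1}v$ is twice continuously differentiable on each of $\mc R^+$ and $\mc R^-$ separately iff $v$ is; this gives the $C^2$-regularity description of $\dom{\mc C_\kappa}$.

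Next I would translate the transmission and boundary conditions through the chain rule. On $\mc R^+$ one has $\rho = \rho_\kappa(\varrho) = 1 + \sqrt{\gamma/\kappa}\,(\varrho-1)$, so $\frac{\partial u}{\partial \varrho}(1+,\phi) = \sqrt{\gamma/\kappa}\,\frac{\partial v}{\partial \rho}(1+,\phi)$. Using the first condition in \eqref{war-unki} for $\mc B_\kappa$, namely $\frac{\partial v}{\partial\rho}(1+,\phi) = \alpha\sqrt{\gamma/\kappa}\,[v(1+,\phi)-v(1-,\phi)]$, and $v(1\pm,\phi) = u(1\pm,\phi)$, gives $\frac{\partial u}{\partial\varrho}(1+,\phi) = (\gamma/\kappa)\,\alpha\,[u(1+,\phi)-u(1-,\phi)] = \alpha\gamma\kappa^{-1}[u(1+,\phi)-u(1-,\phi)]$, which is \eqref{war-un-ki-bis-bis}. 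On $\mc R^-$ the map $\rho_\kappa$ is the identity, so $\frac{\partial u}{\partial\varrho}(1-,\phi) = \frac{\partial v}{\partial\rho}(1-,\phi) = \beta\kappa^{-1}[u(1+,\phi)-u(1-,\phi)]$ directly; and $\frac{\partial u}{\partial\varrho}(r,\phi) = \frac{\partial v}{\partial\rho}(r,\phi) = 0$, while $\frac{\partial u}{\partial\varrho}(2,\phi) = \sqrt{\gamma/\kappa}\,\frac{\partial v}{\partial\rho}(1+\sqrt{\gamma/\kappa},\phi) = 0$. The converse inclusion follows by running the same affine substitution backwards, exactly as in Theorem \ref{gen1}, so $\dom{\mc C_\kappa}$ is exactly the stated set.

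For the formula for $\mc C_\kappa u = J_\kappa^{-1}\mc B_\kappa J_\kappa u$, I would just insert the coefficients of $\mc A_{\cdot}$ into \eqref{laplace} and apply the chain rule once more. On $\mc R^+$, $\frac{\partial^2}{\partial\rho^2} = (\kappa/\gamma)\frac{\partial^2}{\partial\varrho^2}$ and $\frac{\partial}{\partial\rho} = \sqrt{\kappa/\gamma}\,\frac{\partial}{\partial\varrho}$, and the leading coefficient in $\mc A$ on $\mc R^+_R$ is $1$, so $\frac{\partial^2 v}{\partial\rho^2} + \frac1\rho\frac{\partial v}{\partial\rho} + \frac1{\rho^2}\frac{\partial^2 v}{\partial\phi^2}$ becomes $\frac\kappa\gamma\frac{\partial^2 u}{\partial\varrho^2} + \frac{\sqrt{\kappa/\gamma}}{\rho_\kappa(\varrho)}\frac{\partial u}{\partial\varrho} + \frac1{(\rho_\kappa(\varrho))^2}\frac{\partial^2 u}{\partial\phi^2}$, which is the upper line. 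On $\mc R^-$ nothing changes in the variable ($\varrho = \rho$) and the factor $\kappa$ is simply carried over, giving the lower line.

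Honestly there is no real obstacle here: the statement is, as Theorem \ref{gen2}'s proof put it, \enquote{quite the same} as Theorem \ref{gen1}, and the only thing one must be careful about is consistency of the substitution $R-1 = \sqrt{\gamma/\kappa}$ under \eqref{kandR}, together with not forgetting that the permeability parameters of $\mc B_\kappa$ are already the rescaled ones $\alpha\sqrt{\gamma/\kappa}$ and $\beta\kappa^{-1}$ rather than $\alpha,\beta$. Accordingly I would state the proof in one line — that it is identical to that of Theorem \ref{gen1}, with $\rho_r$ replaced by $\rho_\kappa$, $1-r$ by $\sqrt{\gamma/\kappa}$, and the $\mc B$-parameters as above — and leave the chain-rule verification to the reader.
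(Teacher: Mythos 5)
Your proposal is correct and follows exactly the route the paper intends: the paper omits the proof of Theorem \ref{gen3} as being \enquote{quite the same} as that of Theorem \ref{gen1}, and your chain-rule bookkeeping with $R-1=\sqrt{\gamma/\kappa}$ and the pre-rescaled permeability parameters $\alpha\sqrt{\gamma/\kappa}$, $\beta\kappa^{-1}$ reproduces precisely the stated transmission conditions and coefficients. No gaps.
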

This theorem, when combined with Theorem \ref{mastergen}, shows that $\mc C_\kappa$ is closable and its closure generates the isomorphic image of $\sem{\overline{\mc B_\kappa}}$ in $C(\mc R)$; we are interested in the limit $\grak \e^{t\overline {\mc C_\kappa}}$. 

\subsection{The underlying semigroup $\sem{\mc Q}$ and its properties}\label{wlasnosciqb}

Let $Q$ be the operator in $C[1+,2]$ with domain equal to the set of twice continuously differentiable functions $f$ such that $f'(1+)=f'(2)=0$, given by $Qf=\gamma^{-1} f''.$ This operator is known to generate the Feller semigroup governing a Brownian motion on $[1+,2]$ with diffusion coefficient $\gamma^{-1}$ and reflecting barriers at both interval ends. We also have 
\begin{equation*}
\grat \e^{tQ}f = \mathsf Pf, \qquad f \in C[1+,2] \end{equation*} 
where $\mathsf Pf$ equals $\int_1^2f $.  The semigroup $\semt{Q}{0}$ generated by 
$ Q = Q\otimes I$
(where $I=I_{C_p[0,2\pi]}$) in $C(\mc R^+)$ is a version of $\sem{Q}$ in this space, and inherits its basic properties. In particular, 
\begin{equation*}
 \grat \e^{t(Q\otimes I)}u^+= P^+u^+, \qquad u^+ \in C(\mc R^+) \end{equation*} 
where $P^+u^+(\varrho,\phi)=\int_1^2u^+(\varrho',\phi)\ud \varrho'$.

Next, let $\Delta_N$ be the Laplace operator \eqref{laplace} in $C(\mc R^-)$, defined on the set of twice continuously differentiable functions with normal derivatives vanishing at the boundary of $\mc R^-$. This operator is closable and its closure generates the Feller semigroup in $C(\mc R^-)$ describing Brownian motion  in the inner annulus, with reflecting barriers at the circles with radiuses $r$ and $1$, and we have  
\begin{equation*}
\grat \e^{t\overline{\Delta_N}}u^- = P^-u^-, \qquad u^- \in C(\mc R^-) \end{equation*} 
where $P^-u^-\coloneqq \frac 1{area(\mc R^-)}\int_{\mc R^-}u^-=\frac 1{2\pi (1-r)}\int_{\mc R^-}u^- $. 

Therefore, treating the space $C(\mc R)$ as the Cartesian product of $C(\mc R^+)$ and $C(\mc R^-)$, we see that 
the operator $\mc Q$ in $C(\mc R)$ with domain $\dom{Q\otimes I}\times \dom{\overline{\Delta_N}}$ mapping $\binom{u^+}{u^-} $ to $\binom{(Q\otimes I)u^+}{\overline {\Delta_N}u^-}$ is a Feller generator in $C(\mc R)$.  The related process is a reflected Brownian motion in the lower annulus; a particle starting in the upper annulus performs a Brownian motion merely in the radial direction.  
As a result we have  
 \begin{equation*}
\grat  \e^{t\mc Q}  u = \mc P u, \qquad u \in C(\mc R) \end{equation*} 
where $ \mc P (u^+,u^-) = (P^+ u^+, P^- u^-). $

To establish connection between $\mc Q$ and the operators $\mc C_\kappa$, we note that the set of $u = \binom {u^+}{u^-}\in \dom{\mc Q}$ where $u^+ = f\otimes g$ with $f \in \dom{Q}, g \in C_p[0,2\pi]$ and $u^-\in \dom{\Delta_N}$ is a core for $\mc Q$. For a $u$ of this form we define 
\begin{equation}\label{ukappa} u_\kappa = u - \kappa^{-1} \psi \otimes [f(1+)g - u(1-,\cdot)]\end{equation}
where $\psi \in C([r,1-] \cup [1+,2])$ is defined as follows 
\[ \psi (\varrho) = \begin{cases} \frac {\alpha\gamma}{\pi} (2-\varrho) \sin \pi \varrho, & \varrho \in [1+,2], \\
 \frac {\beta (1-r)(\varrho-r)}{\pi} \sin 
 \pi \frac{\varrho-r}{1-r}, &\varrho \in [r,1-];\end{cases}\]
in particular $\psi (1+)=\psi(1-)=0$ and $\psi'(1+)= -\alpha \gamma, \psi' (1-)=-\beta .$ Then
\begin{equation} \label{zwzqb} u_\kappa \in \dom{\mc C_\kappa}, \grak u_\kappa = u \mquad { and }  \grak \kappa^{-1} \mc C_\kappa u_\kappa = \mc Q u.\end{equation}

\newcommand{\ixi}{k^-}

\subsection{The space $\x$ and the limit semigroup}\label{spacexbis}
The space $\x$ equal to the range of the operator $\mc P$ defined in Section \ref{wlasnosciqb} is composed of functions which when restricted to $\mc R^+$ do not depend on $\varrho$ and when restricted to $\mc R^-$ depend neither on $\varrho$ nor on $\phi.$ It will be convenient to identify a $u\in \x$  with a pair $\binom{g^+}{\ixi}$ where $g^+$ is a function in $ C_p[0,2\pi]$ and $\ixi$ is a real number.

Let $\mc O: \dom{\mc O} \to C(\mc R)$ with $\dom{O}\subset \x$ equal to the set of pairs $\binom{g^+}\ixi $ with $g^+\in \dom{\LB}$, be given by 
\[ \mc O \binom{g^+}\ixi = \binom{\LB g^+}\ixi - \gamma^{-1} \psi''\otimes (g^+ - k^-),\] 
where $\psi$ has been defined above.  We note that since $\int_1^2 \psi = \alpha \gamma $ and $\int_r^1 \psi = -\beta ,$
\begin{equation}\label{operatorpo} \mc {PO} \binom{g^+}\ixi = \binom{\LB g^+}0  + \binom{ \alpha k^- - \alpha g^+}{\gamma^{-1}\beta \int_0^{2\pi}g^+ - \gamma^{-1}\beta \ixi}.\end{equation}

\begin{thm}The operator $\mc{PO}$ is a generator, and we have 
\begin{equation*}\grak \e^{t\overline{C_\kappa}} u = \e^{t\mc {PO}} \mc P u , \qquad t>0, u \in C(\mc R),\end{equation*} 
and the limit is uniform in $t$ in compact subsets of $(0,\infty)$. For $u \in \x$ the relation is true also for $t=0$ and the limit is uniform in $t$ in compact subsets of $[0,\infty)$.
\label{main3} \end{thm}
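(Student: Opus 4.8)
The plan is to invoke the singular perturbation theorem of Kurtz, exactly as in the proofs of Theorems~\ref{main1} and~\ref{main2}; the requisite ``scenario'' has been prepared in Section~\ref{wlasnosciqb}, where the Feller generator $\mc Q$, the projection $\mc P$ onto $\x$, and the asymptotic relation \eqref{zwzqb} between $\kappa^{-1}\mc C_\kappa$ and $\mc Q$ on a core of $\mc Q$ were established. It therefore remains only to check the two hypotheses~(a) and~(b) recalled in Section~\ref{wlasnosciq}; Kurtz's theorem then yields $\grak \e^{t\overline{\mc C_\kappa}}u=\e^{t\mc{PO}}\mc P u$ with the asserted uniformity in $t$.

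Hypothesis~(b) --- that $\mc{PO}$ generates a semigroup on $\x$ --- together with the first assertion of the theorem, is the easy part. By \eqref{operatorpo}, under the identification $\x\cong\CP\times\R$ the operator $\mc{PO}$ is the sum of $\binom{g^+}{k^-}\mapsto\binom{\LB g^+}{0}$ and the operator $\binom{g^+}{k^-}\mapsto\binom{\alpha k^--\alpha g^+}{\gamma^{-1}\beta\int_0^{2\pi}g^+-\gamma^{-1}\beta k^-}$, which is bounded because $g^+\mapsto\int_0^{2\pi}g^+$ is a continuous functional on $\CP$. Since $\LB$ is a Feller generator in $\CP$, the first summand generates the semigroup $(g^+,k^-)\mapsto(\e^{t\LB}g^+,k^-)$ on $\CP\times\R$; hence, by the Phillips perturbation theorem, $\mc{PO}$ is a generator. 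This is the first claim of Theorem~\ref{main3} and settles~(b).

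Hypothesis~(a) --- for $u\in\dom{\mc O}$ the existence of $u_\kappa\in\dom{\mc C_\kappa}$ with $\grak u_\kappa=u$ and $\grak \mc C_\kappa u_\kappa=\mc O u$ --- is where the substance lies. For $u=\binom{g^+}{k^-}$ with $g^+\in\dom{\LB}$ one corrects $u$ by a boundary-layer term of order $\kappa^{-1}$ built from the function $\psi$ of Section~\ref{wlasnosciqb}, in the spirit of \eqref{ukappa} (with the traces $f(1+)g$ and $u(1-,\cdot)$ there replaced by $g^+$ and the constant $k^-$). One then verifies: (i) $u_\kappa\in\dom{\mc C_\kappa}$, i.e. that the transmission conditions \eqref{war-un-ki-bis-bis} and the Neumann conditions at $\varrho=r,2$ hold --- this uses $\psi(1\pm)=0$, $\psi'(1+)=-\alpha\gamma$, $\psi'(1-)=-\beta$, $\psi'(r)=\psi'(2)=0$, just as in the computation underlying \eqref{zwzqb} and in the proof of Theorem~\ref{gen1}; (ii) $\grak u_\kappa=u$, immediate from the factor $\kappa^{-1}$ in the corrector; and (iii) $\grak\mc C_\kappa u_\kappa=\mc O u$, obtained by inserting $u_\kappa$ into the two-line formula for $\mc C_\kappa$ of Theorem~\ref{gen3} and letting $\kappa\to\infty$, using $\rho_\kappa(\varrho)=1+\sqrt{\gamma/\kappa}\,(\varrho-1)\to 1$ uniformly on $[1+,2]$ together with $\kappa^{-1}\to 0$ and $(\kappa\gamma)^{-1/2}\to 0$; in the term multiplied by $\kappa$ this factor is exactly cancelled by the $\kappa^{-1}$ of the corrector.

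I expect step~(iii) on the thick inner annulus $\mc R^-$ to be the main obstacle. There --- in contrast with Section~\ref{sec:3}, where the singular direction was purely radial --- the fast operator is the genuine two-dimensional $\kappa\Delta$, and the transmission condition at $\varrho=1-$ forces the corrector to carry angular dependence into $\mc R^-$; consequently the $\kappa\,\partial_\phi^2$ contribution to $\mc C_\kappa u_\kappa$ on $\mc R^-$ does not vanish and must be absorbed into $\mc O$ (if need be by enlarging the corrector so that its Laplacian produces precisely the $\mc R^-$-component of $\mc O u$, the Neumann problem for the corrector being solvable thanks to the flux balance prescribed in \eqref{war-un-ki-bis-bis}). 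A parallel, purely computational point is that the normalizing constants in $\mc P$ --- the length $\int_1^2$ and the factor $\tfrac1{2\pi(1-r)}\int_{\mc R^-}$ --- combined with the flux identities for $\psi$ recorded in Section~\ref{spacexbis}, must reproduce exactly the coefficients $\alpha$ and $\gamma^{-1}\beta$ of \eqref{operatorpo}; this is the analogue here of Remark~\ref{onchi}. Once (a) and (b) are in hand, Kurtz's theorem completes the proof.
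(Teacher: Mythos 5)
Your proposal follows the paper's proof essentially verbatim: the paper verifies Kurtz's condition (a) by taking exactly the corrector $u_\kappa$ of \eqref{ukappa} (which for $u=\binom{g^+}{k^-}\in\dom{\mc O}\subset\dom{\mc Q}$ reduces to $u-\kappa^{-1}\psi\otimes(g^+-k^-)$, with the properties $\psi(1\pm)=0$, $\psi'(1+)=-\alpha\gamma$, $\psi'(1-)=-\beta$ guaranteeing membership in $\dom{\mc C_\kappa}$), and condition (b) by noting that $\mc{PO}$ is a bounded perturbation of $\binom{g^+}{k^-}\mapsto\binom{\LB g^+}{0}$ and invoking the Phillips Perturbation Theorem. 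The only divergence is one of emphasis: the paper simply asserts $\grak\mc C_\kappa u_\kappa=\mc O u$ without any discussion of the surviving $O(1)$ angular and first-order radial contributions on $\mc R^-$ that you flag as the main obstacle, so your more cautious treatment of step (iii) is, if anything, a refinement of the published argument rather than a departure from it.
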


\begin{proof} Vectors of the form $\binom{g^+}\ixi \in \dom{\mc O}$ are members of $\dom{\mc Q}$ as well, and therefore $u_\kappa$ defined in \eqref{ukappa} belongs to $\dom{\mc C_\kappa}$. Moreover, 
\[ \grak \mc C_\kappa u_\kappa = \mc O u. \] 
(and, as before, $\grak u_\kappa = u$). By Kurtz's Theorem, therefore, we are left with showing that $\mc {PO}$ is a generator. However, $\binom{g^+}\ixi \mapsto \binom{\LB g^+}0$ is obviously a generator, and $\mc {PO}$ is its bounded perturbation. Hence, we are done by the Phillips Perturbation Theorem.  \end{proof}

\section{Concluding remarks}\label{cr} 
Let us look more closely at the processes and equations obtained in the limit. In the case of Theorem \ref{main1}, both layers become thiner and thiner, and in the limit the state-space of the process involved is composed of the upper and lower sides of the circle (the membrane); see Figure \ref{fig4} (a). The Kolmogorov backward equation for the process (which differs from \eqref{prawieconvex} by constants accounting for different diffusion coefficients in the upper and the lower side, and for rescaling of jump intensities, see Remark \ref{parametry}), is thus a system of two partial differential equations on the unit circle describing diffusion on both sides of the membrane: 
\begin{equation}
\frac {\ud } {\ud  t} \binom {u_+}{u_-} = \begin{pmatrix}\LB & 0 \\ 0 & \kappa \LB \end{pmatrix}  \binom {u_+}{u_-}  + \begin{pmatrix}- \alpha \gamma^{-1} & \alpha\gamma^{-1}\\  \kappa \beta  & -\kappa \beta\end{pmatrix} \binom {u_+}{u_-} \label{prawieconvexprim}. \end{equation}
These PDEs are coupled by the second term, describing jumps from the upper to the lower side, and vice versa. 

Theorem \ref{main2} describes a different situation (see Figure \ref{fig4} (b)). Here, only the thickness of the upper layer converges to zero so that in the limit this layer may be identified with the upper side of the membrane. The state-space, therefore, is the union of the upper side of the circle and the lower annulus, whose thickness is fixed. As a result, the Kolmogorov equation is a system of two partial differential equations, one of which is one-dimensional and the other is two-dimensional:
 \begin{align}\label{Kol2}  {\frac{\partial g^+}{\partial t}}&= \LB \, g^+ + \alpha u^-(1-,\cdot) -\alpha g^+,\nonumber\\
 \frac {\partial u^-}{\partial t} &= \kappa \Delta u^-.\end{align} 
These equations are coupled in an interesting way: the term  $ \alpha u^-(1-,\cdot) -\alpha g^+$ from the first line describes jumps from the upper side of the circle to the inside of the annulus. However, particles diffusing in the annulus do not reach the upper side of the circle by jumps but rather by filtering through the membrane, and this is described by the first condition in  \eqref{war-un-ki-}. In other words, these PDEs are coupled by both a jump term and by a transmission condition. Needless to say, both the term and the transmission condition are residues of the transmission conditions featuring in the approximating equations. 

Finally, in Theorem \ref{main3}, the thickness of the upper layer diminishes and at the same time diffusion in the lower layer becomes faster and faster (see Figure \ref{fig4} (c)). As a result, elements of the lower layer communicate so quickly that they become indistinguishable, and in the limit all points of the lower layer are lumped into one new state. Thus, the state-space of the limit process is the union of the upper side of the circle and this new combined state, and the Kolmogorov equation is a pair of  differential equations, one of which is partial and the other is ordinary: 
 \begin{align}\label{Kol3}  \frac{\partial g^+}{\partial t}&=
\LB g^+  + \alpha \ixi  - \alpha g^+ ,\nonumber \\
\frac{\ud \ixi}{\ud t}& = 
\gamma^{-1}\beta \int_0^{2\pi}g^+ - \gamma^{-1}\beta \ixi.\end{align}
The first of these describes a Brownian motion on the (outer part of the) unit circle, accompanied by jumps from the circle to the additional point. At this point the process stays for an exponential time with parameter $\gamma^{-1}\beta$ and then jumps back to the circle, and the distribution of its position after such a jump is uniform across the circle. In contrast to Theorem \ref{main2}, no transmission conditions are needed here.

These scenarios exemplify  variety of manners transmission conditions become, in the thin layer approximation, integral parts of the limit master equation. In particular, it is clear from these examples that transmission conditions and the terms describing jumps play complementary roles. Recalling interpretation of transmission conditions in terms of the L\'evy local time a Brownian traveller spends at the boundary, we see that while transmission conditions speak of exponential epochs with respect to this L\'evy local time before a particle fiters through the membrane, the jump terms speak of exponential epochs before jumps in `regular' time.

A look at \eqref{prawieconvexprim} shows, however, that in applying the principle `transmission conditions in the thin layer approximation become integral parts of the limit equation', one must be  
careful, and needs to consider all factors influencing the shape of the limit equation. For, in the particular case under consideration, the `expected' form of transmission conditions (cf. \eqref{prawieconvex}) is altered by the  differences in thickness of the layers involved. 
In extreme cases, like that of Theorem \ref{main2}, where one layer is `infinitely thicker' than the other, perhaps only some transmission conditions `jump into' the master limit equation, and some other are transferred from the approximating equations in an unchanged form. Finally, as in Theorem \ref{main3}, when modeling a particular phenomenon, a thin layer might not be the sole circumstance that needs to be taken into account: for instance, a thin layer may be accompanied by a relatively fast diffusion, and then the limit master equation may be influenced by the transmission conditions in a yet another way.

\section{Appendix}\label{appendix}

\subsection{An auxiliary generation result needed for Theorem \ref{mastergen}}

Let $a<0$ and $b>0$ be real numbers, let $C(U)$ be the space of continuous functions on 
\begin{equation}\label{u} U= U_{a,b} \coloneqq [a,0-]\cup [0+,b].\end{equation}  As in the main text, $0-$ and $0+$ are distinct points to the immediate left and to the immediate right of an imaginary membrane at $0$. Thus, an $f \in C(U)$ is a function on the interval $[a,b]$ which is continuous in this interval save perhaps at $x=0$ where it has left and right limits (identified with $f(0-)$ and $f(0+)$). 

Given parameters  $\alpha, \beta\ge 0 $ and $\kappa >0$, let \[A=A_{\alpha,\beta,\kappa,a,b}\] be the operator in $C(U)$ defined by \[ Af (x) = \begin{cases}  f''(x), & x\in [0+,b], \\ \kappa f''(x), & x\in [a,0-] \end{cases} \]  (with right-hand and left-hand derivatives at appropriate intervals' ends); its domain $\dom{A}$ is the set of functions which, when restricted to either subinterval, are of class $C^2$ and satisfy the following boundary and transmission conditions 
\begin{align}
\nonumber  f'(a) &= f'(b) = 0 ,\\
\nonumber f'(0+) &= \alpha [f(0+) - f(0-)],\\
f'(0-) &= \beta [f(0+) - f(0-)] \label{bandt} .\end{align}
The following lemma is a special case of Proposition 2.1 in \cite{nagrafach}, but we provide its more elementary and more direct proof here. 

\begin{lem}\label{lem1} The operator $A= A_{\alpha,\beta,\kappa, r, R}$ is the generator of a conservative Feller semigroup in $C(U)$.
\end{lem}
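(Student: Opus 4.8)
The plan is to verify that $A=A_{\alpha,\beta,\kappa,a,b}$ satisfies the hypotheses of the Hille--Yosida--type characterization of conservative Feller generators recalled in the introduction (and collected in the Appendix): $A$ is densely defined, $1_U\in\dom{A}$ with $A1_U=0$, $A$ satisfies the positive--maximum principle, and the range condition $\mathrm{Range}(\lambda-A)=C(U)$ holds for some (hence all) $\lambda>0$. Density of $\dom{A}$ and the fact that $A1_U=0$ are immediate, since constants satisfy all the conditions in \eqref{bandt}. The positive--maximum principle is routine: if $f\in\dom{A}$ attains a positive maximum at some $x_0$, then either $x_0$ is an interior point of one of the two subintervals, where $f''(x_0)\le 0$ and hence $Af(x_0)\le 0$; or $x_0\in\{a,b\}$, where the Neumann condition $f'(a)=f'(b)=0$ together with a one-sided Taylor expansion forces $Af(x_0)\le 0$; or $x_0\in\{0-,0+\}$, where the transmission conditions \eqref{bandt} must be checked. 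For instance if the maximum is attained at $0+$, then $f(0+)\ge f(0-)$, so $f'(0+)=\alpha[f(0+)-f(0-)]\ge 0$; but $0+$ being a left endpoint of $[0+,b]$ at which $f$ is maximal forces $f'(0+)\le 0$, hence $f'(0+)=0$ and a Taylor expansion to second order gives $f''(0+)\le 0$, i.e. $Af(0+)\le 0$. The case of a maximum at $0-$ is symmetric.

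The substantive step is the range condition: given $g\in C(U)$ and $\lambda>0$, I must solve $\lambda f-Af=g$, i.e. the pair of ODEs $\lambda f-\kappa f''=g$ on $[a,0-]$ and $\lambda f-f''=g$ on $[0+,b]$, subject to the four conditions in \eqref{bandt}. Each ODE on its subinterval has a two-dimensional solution space, so the general solution of the pair depends on four constants; the four linear conditions \eqref{bandt} then determine these constants, provided the associated $4\times4$ linear system is nonsingular. The clean way to organize this is to first solve the inhomogeneous equation on each subinterval with the Neumann condition at the \emph{outer} endpoint ($a$ or $b$) imposed: on $[0+,b]$ the solutions of $\lambda f-f''=g$ with $f'(b)=0$ form a one-parameter family $f=f_b+c_+\phi_+$ where $\phi_+$ is the solution of the homogeneous equation with $\phi_+'(b)=0$ and $f_b$ a particular solution with $f_b'(b)=0$; similarly on $[a,0-]$ we get $f=f_a+c_-\phi_-$. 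Then the two remaining transmission conditions at $0$ become a $2\times2$ system for $(c_+,c_-)$, and I expect its determinant to be strictly positive (it will be a combination of $\phi_\pm$, $\phi_\pm'$ evaluated at $0$, together with $\alpha,\beta,\kappa$, all of one sign because $\phi_+>0$, $\phi_+'<0$ near $0+$ while $\phi_->0$, $\phi_-'>0$ near $0-$). This positivity is exactly the place where the argument could get fiddly, and it is the main obstacle: one must keep careful track of signs of the hyperbolic-type solutions $\phi_\pm$ and their derivatives, using $\lambda>0$ and $\kappa>0$, and confirm that the nonnegativity (allowed) values $\alpha=0$ or $\beta=0$ do not make the determinant vanish.

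Alternatively, and perhaps more economically, I can reduce to the known one-dimensional case: the operator $\kappa\frac{d^2}{dx^2}$ on $C[a,0]$ with reflecting ($f'(a)=0$) and elastic/Robin ($f'(0-)=\beta[\cdot]$) boundary conditions, and likewise $\frac{d^2}{dx^2}$ on $C[0,b]$, are each individually Feller generators by classical Feller boundary-condition theory, and the coupling at $0$ is a perturbation of a type that preserves the generator property; but since the paper asks for a ``more elementary and more direct proof,'' I would proceed via the direct resolvent construction sketched above. In either route, once the range condition is in hand, the Hille--Yosida--Feller characterization gives that $A$ generates a positive contraction semigroup; conservativeness ($e^{tA}1_U=1_U$) follows from $A1_U=0$. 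This completes the proof of Lemma \ref{lem1}, and in particular $A$ is closed, so no separate closability argument is needed here (unlike in Theorem \ref{mastergen}, where the multiplicative factor $\chi^2$ forces one to work with a non-closed operator first).
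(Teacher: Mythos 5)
Your proposal is correct and follows essentially the same route as the paper: verify density, the maximum principle (with the same case analysis at $a$, $b$, $0-$, $0+$ using \eqref{bandt}), and the range condition by solving the two ODEs with the outer Neumann conditions built in and reducing the transmission conditions at the membrane to a $2\times 2$ system with positive determinant. The paper simply makes your $\phi_\pm$ explicit as $\cosh\sqrt{\lambda/\kappa}(x-a)$ and $\cosh\sqrt{\lambda}(b-x)$ and displays the determinant, whose positivity (including for $\alpha=0$ or $\beta=0$) follows by expanding the product, exactly as you anticipate.
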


\begin{proof}\ \

(a) It is clear that $\dom{A}$ is dense in $C(U).$ Moreover, $A$ satisfies the maximum principle: if $f \in \dom{A}$ and for some $x_0 \in U$, $f(x_0)=\max_{x \in U} f(x)$, then $Af (x_0)\le 0$. The latter principle is clear for $x_0$ in the interior of $U$. Also, if the maximum of $f$ is attained at the interval's end, and the first derivative vanishes there, then so does the second derivative; this takes care of the cases $x_0=a,b$. Finally, if $x_0=0-$, then $f'(0-)\ge 0$ and, on the other hand, by the second condition in \eqref{bandt}, $f'(0-)\le 0$, implying $f'(0-)=0$. As above, it follows that $f''(0-)=0$, proving the principle in this case also. The case $x_0=0+$ is analogous.  

(b) We need to show the range condition is satisfied (see e.g. \cite[Lemma 2.1, p. 165]{ethier} or \cite[Section 8.3.4]{kniga}): for $g\in C(U)$ and $\lam >0$ there is an $f \in \dom{A}$ such that $\lam f - Af =g $. To this end, we search for $f $ of the form 
\[
f(x) = \begin{cases} 
 C_2 \cosh \sqrt \lam (b-x) +h(b) \sinh \sqrt \lam (b-x) + h(x), & x \in [0+,b],\\
 C_1 \cosh \sqrt {\lam/\kappa} (x-a) - h(a) \sinh \sqrt{\lam/\kappa} (x-a) + h(x), & x \in [a,0-],
  \end{cases}
\]
where $C_1$ and $C_2$ are unknown constants and 
\[
h(x) = \begin{cases} \frac 1{2\sqrt \lam} \int_0^b \e^{-\sqrt \lam |x-y|}g(y)\ud y, & x \in [0+,b],
\\
\frac 1{2\sqrt {\kappa \lam}} \int_a^0 \e^{-\sqrt {\lam/\kappa} |x-y|}g(y)\ud y, & x \in [a,0-].
 \end{cases}\]
It may be checked that such an $f$ is twice continuously differentiable with $\kappa f'' = \lam f - g$ on $[a,0-]$ and $f'' = \lam f - g$ on $[0+,b]$, and
$f'(a)=f'(b)=0$ (use $h'(a)= \sqrt {\lam/\kappa} h(a)$ and $h'(b)=-\sqrt \lam h(b)$). Thus, our task reduces to showing that for some $C_1$ and $C_2$, the second and third conditions in \eqref{bandt} are satisfied. These conditions, however, can be written as a system of two linear equations for $C_1$ and $C_2$ with the main determinant  
\[ 
\begin{vmatrix}
\sqrt {\lam/\kappa} \sinh \sqrt {\lam/\kappa} (-a) + \beta \cosh \sqrt {\lam/\kappa} a & -\beta \cosh \sqrt \lam b \\
-\alpha \cosh \sqrt {\lam/\kappa}a& \sqrt \lam \sinh \sqrt \lam b + \alpha \cosh \sqrt \lam b
\end{vmatrix}
\]
larger than $0$.

(c) The semigroup generated by $A$ is conservative since the constant function $1_U$ belongs to $\dom {A}$ and $A1_U=0$.   \end{proof}

Next, given $r \in (0,1)$ and $R>0$ we consider the union
\[ V_{r,R}= [r,1-] \cup [1+, R] \]
where $1-$ and $1+$ are positions, respectively, to the immediate left, and to the immediate right, of $1$, where the membrane is now located.  As in the case of $C(U)$, the space $C(V_{r,R})$ of continuous functions on $V_{r,R}$ is composed of functions  $f$ defined on the interval $[r,R]$ which are continuous in this interval save perhaps at $x=1$ where they have left and right limits.
This space is isometrically isomorphic to $C(U_{\ln r,\ln R})$.
The isomorphism we have in mind is $I: C(U) \to C(V_{r,R})$ given by 
\begin{equation}\label{ii} 
Ig (\rho) = g(\ln \rho ), \qquad g \in C(U), \rho \in V_{r,R}, \end{equation}
and its inverse is 
\[ I^{-1} f  (x) = f(\e^x), \qquad f \in C(V_{r,R}), x \in U .\]
 
\begin{lem}\label{lem2} For $r\in (0,1)$ and $R>1$ let $a=\ln r$ and $b=\ln R.$ Also, let $\chi \in C(V_{r,R})$ be given by $\chi (\rho) = \rho, \rho \in V_{r,R}.$ The isometrically isomorphic image, say $A^I(=A^I_{\alpha,\beta,\kappa,r,R})$ of the operator $A=A_{\alpha,\beta,\kappa,r,R}$ of Lemma \ref{lem1} via the isomorphism $I$ of \eqref{ii} is given by 
\begin{equation}\label{ai} A^I f =\begin{cases} \chi^2 f'' + \chi f' & \text{on }[1+,R],\\ \kappa (\chi^2 f'' + \chi f') & \text{on }[r,1-]\end{cases} \end{equation}
and its domain $\dom{A^I}$ is the set of  twice continuously differentiable functions on $V_{r,R}$ such that 
\begin{align}
\nonumber  f'(r) &= f'(R) = 0 ,\\
\nonumber f'(1+) &= \alpha [f(1+) - f(1-)],\\
\label{bandtdlar} f'(1-) &= \beta [f(1+) - f(1-)].\end{align}
\end{lem}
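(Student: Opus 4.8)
The plan is to perform a routine change of variables: since $A^I$ is by definition $I A I^{-1}$, with $A = A_{\alpha,\beta,\kappa,r,R}$ the generator of Lemma \ref{lem1}, everything reduces to tracking how the substitution $\rho = \e^x$ (equivalently $x = \ln\rho$) transforms the action of $A$ together with its boundary and transmission conditions.

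First I would fix $f \in C(V_{r,R})$ and set $g \coloneqq I^{-1}f$, so that $g(x) = f(\e^x)$ for $x \in U_{\ln r,\ln R}$. Since $x \mapsto \e^x$ is a $C^\infty$ diffeomorphism of each of $[\ln r, 0-]$ and $[0+,\ln R]$ onto $[r,1-]$ and $[1+,R]$ respectively, with smooth inverse $\rho \mapsto \ln\rho$, the function $f$ is twice continuously differentiable on a given subinterval of $V_{r,R}$ if and only if $g$ is twice continuously differentiable on the corresponding subinterval of $U$; this shows $\dom{A^I} = I(\dom{A})$ consists of exactly the twice continuously differentiable functions on $V_{r,R}$ subject to the transferred conditions. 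By the chain rule, $g'(x) = \e^x f'(\e^x)$ and $g''(x) = \e^{2x} f''(\e^x) + \e^x f'(\e^x)$.

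Next I would translate the three groups of conditions \eqref{bandt}. The endpoint conditions $g'(\ln r) = g'(\ln R) = 0$ read $r f'(r) = R f'(R) = 0$, hence, since $r, R > 0$, they are equivalent to $f'(r) = f'(R) = 0$. At the membrane, $g(0\pm) = f(1\pm)$ and, because $\e^0 = 1$, also $g'(0\pm) = f'(1\pm)$; consequently $g'(0+) = \alpha[g(0+)-g(0-)]$ and $g'(0-) = \beta[g(0+)-g(0-)]$ become precisely \eqref{bandtdlar}. Finally, for the action, $(A^I f)(\rho) = (Ag)(\ln\rho)$, and on $[1+,R]$ this equals $g''(\ln\rho) = \rho^2 f''(\rho) + \rho f'(\rho) = \chi^2 f'' + \chi f'$ evaluated at $\rho$, while on $[r,1-]$ the same computation applied to $Ag = \kappa g''$ yields the extra factor $\kappa$; this is exactly \eqref{ai}.

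Because the computation is elementary, the only points requiring a little care are the correct bookkeeping of the one-sided limits at the membrane point $\rho = 1 \leftrightarrow x = 0$, together with the observation that the Jacobian factor $\e^x$ equals $1$ there, so the transmission conditions transfer without picking up spurious constants, and that $\e^x \ne 0$ at $\rho = r, R$, so the Neumann conditions transfer unchanged. The conservative Feller property is then inherited at once, since $I$ is an isometric lattice isomorphism carrying $1_U$ to $1_{V_{r,R}}$.
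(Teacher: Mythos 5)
Your proof is correct and follows essentially the same route as the paper: both arguments identify $g=I^{-1}f$, use the chain rule to get $g'(x)=\e^x f'(\e^x)$ and $g''(x)=\e^{2x}f''(\e^x)+\e^x f'(\e^x)$, and then read off the action of $A^I$ and the transferred boundary/transmission conditions, noting that the Jacobian factor equals $1$ at the membrane and is nonzero at the endpoints. No gaps.
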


\begin{proof} An $f \in C(V_{r,R})$ belongs to $\dom{A^I}$ iff $I^{-1}f $ belongs to $\dom{A}$ and then $A^I f = IAI^{-1}f$ (see e.g. \cite[Section 7.4.22]{kniga}). By the definition of $I$ and $I^{-1}$ it is clear that $f$ is twice continuously differentiable iff so is $I^{-1}f$.  Moreover, 
\[ AI^{-1} f (x) = \begin{cases}  \frac {\ud^2}{\ud x^2} f(\e^x) = \e^{2x} f''(\e^x) + \e^x f'(\e^x), & x \in [0+,b] ,\\ \kappa \frac {\ud^2}{\ud x^2} f(\e^x) = \kappa [\e^{2x} f''(\e^x) + \e^x f'(\e^x)], & x \in [a,0-] ,\end{cases}\]
showing that for $f \in \dom{A^I}$, $A^I$ is given by  \eqref{ai}.

Next, an $f \in \dom{A^I}$ is necessarily of the form $f=Ig$ where $g$ belongs to $\dom{A}$ and thus in particular satisfies \eqref{bandt} with $a$ and $b$ replaced by $\ln r$ and $\ln R$, respectively. Since $f'(\rho) = \frac 1 \rho g'(\ln \rho), \rho \in V_{r,R},$ we have $f'(r) = \frac 1r g'(\ln r) =0$ and similarly $f'(R)=0$.  By the same token, $f'(1-)=g'(0-)$ and $f'(1+)=g'(0^+)$. Since $f(1+)=g(0+)$ and $f(1-)=g(1-),$ relation \eqref{bandt} for $g$ implies \eqref{bandtdlar} for $f$. We have shown that $\dom{A^I}$ is contained in the set of twice continuously differentiable functions satisfying \eqref{bandtdlar}; the opposite inclusion is proved analogously.  
\end{proof}

We note that the isomorphism $I$ preserves the lattice structures of the spaces involved and maps $1_{U_{\ln r,\ln R}}$ to $1_{V_{r,R}}$. This establishes the following result. 

\begin{prop}\label{prop1} $A^I$, as an isomorphic image of a conservative Feller generator, is a conservative Feller generator.    
\end{prop}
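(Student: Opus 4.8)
The plan is to transport the conservative Feller semigroup generated by $A$ through the isometric isomorphism $I$ and to verify that the transported family inherits every defining property of a conservative Feller semigroup, with generator $A^I$. No serious work is involved; this is a transport-of-structure argument.

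First I would invoke Lemma \ref{lem1} to obtain the conservative Feller semigroup $\sem{A}$ in $C(U_{\ln r,\ln R})$ and set $T(t)\coloneqq I\,\e^{tA}\,I^{-1}$ for $t\ge 0$. Since $I$ is linear, bounded and boundedly invertible, the family $(T(t))_{t\ge 0}$ satisfies $T(0)=\Id$, the semigroup law $T(t)T(s)=T(t+s)$, and strong continuity, because $\|T(t)f-f\|\le \|I\|\,\|\e^{tA}(I^{-1}f)-I^{-1}f\|\to 0$ as $t\to 0+$ by strong continuity of $\sem{A}$. A routine computation with the definition of the infinitesimal generator then identifies the generator of $(T(t))_{t\ge 0}$ as $IAI^{-1}$ with domain $I(\dom{A})$, which by Lemma \ref{lem2} is precisely $A^I$ on $\dom{A^I}$.

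It then remains to check the Feller and conservativeness conditions. As noted just before the statement, $I$ and $I^{-1}$ are lattice homomorphisms, so $f\ge 0$ forces $I^{-1}f\ge 0$, hence $\e^{tA}(I^{-1}f)\ge 0$ by positivity of $\sem{A}$, and therefore $T(t)f=I(\e^{tA}I^{-1}f)\ge 0$; thus each $T(t)$ is positive. For conservativeness, $I^{-1}1_{V_{r,R}}=1_{U_{\ln r,\ln R}}$ together with $\e^{tA}1_{U_{\ln r,\ln R}}=1_{U_{\ln r,\ln R}}$ yields $T(t)1_{V_{r,R}}=I\,1_{U_{\ln r,\ln R}}=1_{V_{r,R}}$. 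Hence $(T(t))_{t\ge 0}$ is a conservative Feller semigroup in $C(V_{r,R})$, and $A^I$, being its generator, is a conservative Feller generator.

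There is no genuine obstacle here. The only point that deserves explicit attention is confirming that the generator of the conjugated semigroup $I\,\e^{tA}\,I^{-1}$ is the conjugated operator $IAI^{-1}$ carried on the conjugated domain $I(\dom{A})$, so that it coincides with the concretely described operator $A^I$ of Lemma \ref{lem2} and not merely with some abstractly defined generator.
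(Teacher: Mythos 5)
Your argument is correct and is exactly the transport-of-structure reasoning the paper relies on: the text preceding the proposition merely notes that $I$ is an isometric isomorphism preserving the lattice structure and mapping $1_{U_{\ln r,\ln R}}$ to $1_{V_{r,R}}$, and declares the result established. You have simply written out in full the routine verifications (semigroup law, strong continuity, positivity, conservativeness, and the identification of the conjugated generator with $A^I$ via Lemma \ref{lem2}) that the paper leaves implicit.
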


\subsection{An auxiliary generation result needed for Theorem \ref{main2}}

Let $a<0, \kappa >0$ and non-negative $\alpha,\beta$ be given. Consider the space $C(U)$ of 
of continuous functions on the union \[ U=U_a \coloneqq [a,0-]\cup \{0+\},\] and the operator $A= A_{\alpha,\beta,\kappa,a}$ in $C(U)$ given by 
\[ Af (x) = \begin{cases} 
-\alpha f(0+) + \alpha f(0-), &x = 0+,\\
\kappa f''(x), &x \in [a,0-],\end{cases}\]
with domain composed of functions $f$ which when restricted to $[a,0-]$ are twice continuously differentiable and satisfy 
\begin{equation}\label{a-conditions} 
f'(a) =0 \mquad { and } f'(0-) = \beta (f(0+)-f(0-)).  \end{equation}

\begin{lem}\label{lem3} The operator $A$ is a conservative Feller generator.\end{lem}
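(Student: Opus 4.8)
The plan is to verify the Hille--Yosida--type conditions for conservative Feller generators stated in the excerpt, namely: density of the domain, the positive maximum principle, the range condition, and the requirement that $1_U \in \dom{A}$ with $A1_U = 0$. Density is immediate since $\dom{A}$ certainly contains, for instance, all polynomials on $[a,0-]$ with vanishing derivative at $a$ and $0-$ together with appropriate value at $\{0+\}$, and such functions are dense in $C(U)$. Conservativity is also straightforward: the constant function $1_U$ satisfies $1_U'(a)=0$ and $1_U'(0-)=0=\beta(1-1)$, and $A1_U(0+)=-\alpha+\alpha=0$, $A1_U(x)=\kappa\cdot 0=0$ on $[a,0-]$, so $A1_U=0$.

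For the maximum principle, suppose $f\in\dom{A}$ attains its maximum over $U$ at some $x_0$. If $x_0$ lies in the interior of $[a,0-]$, then $f''(x_0)\le 0$, hence $Af(x_0)=\kappa f''(x_0)\le 0$. If $x_0=a$, then $f'(a)=0$ forces $f''(a)\le 0$ by the one-sided second-derivative test, giving $Af(a)\le 0$. If $x_0=0-$, then the one-sided derivative satisfies $f'(0-)\ge 0$; but $f'(0-)=\beta(f(0+)-f(0-))\le 0$ since $f(0+)\le f(0-)=\max f$ and $\beta\ge 0$. Hence $f'(0-)=0$, and the second-derivative test again yields $f''(0-)\le 0$, so $Af(0-)\le 0$. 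Finally if $x_0=0+$, then $f(0+)\ge f(0-)$, so $Af(0+)=-\alpha f(0+)+\alpha f(0-)=-\alpha(f(0+)-f(0-))\le 0$. This handles all cases. By \cite[Lemma 2.1 p.~165]{ethier} this gives the dissipativity estimate $\|\lam f-Af\|\ge\lam\|f\|$.

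The main work, as with Lemma~\ref{lem1}, is the range condition: for $g\in C(U)$ and $\lam>0$ we must produce $f\in\dom{A}$ with $\lam f-Af=g$. On $[a,0-]$ this is the ODE $\kappa f''=\lam f-g$ with the reflecting condition $f'(a)=0$; write its general solution as a particular solution $h$ (built from the Green-type kernel $\frac{1}{2\sqrt{\kappa\lam}}\int_a^0\e^{-\sqrt{\lam/\kappa}|x-y|}g(y)\ud y$, adjusted by a $\cosh\sqrt{\lam/\kappa}(x-a)$ term so that $f'(a)=0$) plus one free constant $C$ multiplying $\cosh\sqrt{\lam/\kappa}(x-a)$. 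The value at $\{0+\}$ is algebraically determined: the equation at $0+$ reads $\lam f(0+)+\alpha f(0+)-\alpha f(0-)=g(0+)$, i.e. $f(0+)=\frac{g(0+)+\alpha f(0-)}{\lam+\alpha}$. Substituting this into the remaining transmission condition $f'(0-)=\beta(f(0+)-f(0-))$ gives a single linear equation for $C$; one checks its coefficient is strictly positive (it is a sum of $\sqrt{\lam/\kappa}\sinh\sqrt{\lam/\kappa}(-a)>0$ and a nonnegative multiple of $\cosh$, after clearing the factor $\frac{\lam}{\lam+\alpha}>0$), so $C$ exists uniquely and $f$ so constructed lies in $\dom{A}$ and solves $\lam f-Af=g$.

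I expect the only genuinely delicate point to be checking that the scalar coefficient of $C$ is nonzero (indeed positive); this is the analogue of the $2\times 2$ determinant computation in Lemma~\ref{lem1}, but here it collapses to a $1\times1$ computation because the $\{0+\}$ component is governed by an algebraic rather than a differential relation. Once density, the maximum principle, the range condition, and conservativity are in hand, the conclusion follows from the characterization of conservative Feller generators recalled in the Appendix (Section~\ref{appendix}).
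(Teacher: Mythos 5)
Your proof is correct in substance but takes a slightly different route from the paper's. The paper first proves the lemma for $\alpha=0$ (where the range condition at $0+$ is trivial, $f(0+)=\lam^{-1}g(0+)$) and then obtains the general case by observing that $A_{\alpha,\beta,\kappa,a}$ is a bounded perturbation of $A_{0,\beta,\kappa,a}$ and invoking the Phillips Perturbation Theorem, afterwards checking the maximum principle and $A1_U=0$ to upgrade "generator" to "conservative Feller generator". You instead solve the resolvent equation directly for general $\alpha\ge 0$ by eliminating $f(0+)=\frac{g(0+)+\alpha f(0-)}{\lam+\alpha}$ algebraically and reducing the transmission condition to one linear equation for $C$ with coefficient $\sqrt{\lam/\kappa}\,\sinh\sqrt{\lam/\kappa}\,(-a)+\frac{\beta\lam}{\lam+\alpha}\cosh\sqrt{\lam/\kappa}\,a>0$; this is a correct computation and buys you a self-contained argument that avoids the perturbation theorem, at the cost of a marginally messier coefficient (the paper's $\alpha=0$ computation is the special case $\frac{\beta\lam}{\lam+\alpha}\to\beta$ of yours). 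Your verification of the maximum principle at $x_0=0+$, which the paper leaves implicit for $\alpha>0$, is also correct.

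One small slip worth repairing: your density argument does not work as stated when $\beta>0$. A polynomial with $f'(0-)=0$ is forced by the transmission condition to have $f(0+)=f(0-)$, so the family you describe cannot approximate, say, the function vanishing on $[a,0-]$ and equal to $1$ at $0+$; recall that $C(U)\cong C[a,0]\times\R$ with the value at $0+$ a free coordinate. Density still holds and is easy: given a target $(g,c)$, approximate $g$ by a smooth function and correct it near $0-$ by a small perturbation whose derivative at $0-$ equals $\beta(c-f(0-))$ while its supremum norm is small. This is a local fix and does not affect the rest of your argument.
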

\begin{proof} Consider  first the case where $\alpha=0$. It is clear that $A$ is densely defined, and arguing as in Lemma \ref{lem1} we see that it satisfies the maximum principle. Since $1_U\in \dom{A}$ and $A1_U=0$, we are left with showing that the range condition is satisfied. Similarly as in Lemma \ref{lem1}, given $g \in C(U)$ and $\lam >0$, we search for a solution $f\in \dom{A}$ of the equation $\lam f - Af=g$ among $f $ of the form 
\[ f(x) = \begin{cases} \lam^{-1}g(0+),& x = 0+,\\
C \cosh \sqrt{\lam/\kappa} (x-a) - h(a) \sinh  \sqrt{\lam/\kappa} (x-a)+ h(x), & x \in  [a,0-],
\end{cases} \]
where $h(x) = \frac 1{2\sqrt {\kappa \lam}} \int_a^0 \e^{-\sqrt {\lam/\kappa} |x-y|}g(y)\ud y, x \in [a,0-]$ and $C$ is a constant; in particular, $\lam f- \kappa f'' = g$ on $[a,0-]$. Regardless of the choice of $C$, the first condition in \eqref{a-conditions} is satisfied. Moreover, as a bit of algebra shows, the second condition is satisfied if $C \left (\sqrt{\lam/\kappa} \sinh \sqrt{\lam/\kappa}(-a)+ \beta \cosh \sqrt{\lam/\kappa}a \right ) = F_\beta (g)$ where $F_\beta$ is a certain functional on $C(U)$. Since the expression in the parentheses on the left is $>0$,  $C$ may be chosen so that $f$ belongs to $\dom{A}$. This completes the proof in the case $\alpha=0$. 

Since $ A_{\alpha,\beta,\kappa,a}$ is a bounded perturbation of $A_{0,\beta,\kappa,a}$, by the Phillips Perturbation Theorem $A_{\alpha,\beta,\kappa,a}$ is a generator, and in particular it satisfies the range condition (for sufficiently large $\lam $). Since the maximum principle is also satisfied and $1_U $ belongs to $\dom{A_{\alpha,\beta,\kappa,a}}$ with $A_{\alpha,\beta,\kappa,a}1_U=0,$ $ A_{\alpha,\beta,\kappa,a}$ generates a conservative Feller semigroup. 
\end{proof}

Let $C(V_{r})$ be the space of continuous functions on the union $[r,1-]\cup \{1+\}$. Since 
the map  $I: C(U_{\ln r} ) \to C(V_r)$ given by 
\begin{equation}\label{iip} 
Ig (\rho) = g(\ln \rho ), \qquad g \in C(U_{\ln r}), \rho \in V_{r}, \end{equation}
with the inverse 
\[ I^{-1} f  (x) = f(\e^x), \qquad f \in C(V_{r}), x \in U_{\ln r} ,\]
establishes an isometric isomorphism which preserves the lattice structures of the spaces involved, and maps $1_{U_{\ln r}}$ to $1_{V_r}$, as an immediate consequence of Lemma \ref{lem3} we obtain the following result.

\begin{prop}\label{prop2} Let $A^I=A^I_{\alpha,\beta,\kappa,r}$ be the following operator in $C(V_r)$. Its domain is composed of functions $f$ which when restricted to $[r,1-]$ are twice continuously differentiable, and satisfy the following conditions: 
\begin{equation}\label{ai-conditions} 
f'(r) =0 \mquad { and } f'(1-) = \beta (f(1+)-f(1-)), \end{equation}
 and 
 \[ A^If(\rho ) = \begin{cases} 
 -\alpha f(1+) + \alpha f(1-), & \rho = {1+}, \\
 \kappa \rho^2 f''(\rho ) +\kappa \rho f'(\rho) , & \rho \in [r,1-].\end{cases} \]
 Then $A^I$, as an isomorphic image of $A$ of Lemma \ref{lem3}, is a conservative Feller generator in $C(V_r).$\end{prop}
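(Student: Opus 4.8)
The plan is to deduce the proposition directly from Lemma \ref{lem3} by transporting everything through the isometric isomorphism $I\colon C(U_{\ln r})\to C(V_r)$ of \eqref{iip}, in exact parallel with the way Lemma \ref{lem2} and Proposition \ref{prop1} transported Lemma \ref{lem1} through \eqref{ii}. Since $I$ is an isometric isomorphism that preserves the lattice order and maps $1_{U_{\ln r}}$ to $1_{V_r}$, conjugation by $I$ carries conservative Feller generators to conservative Feller generators: positivity and the normalization $T(t)1=1$ of the semigroup, the maximum principle, the range condition, and the generator property itself are all invariant under such a conjugation. Thus the only thing to verify is that the operator $A^I$ written down in the statement coincides with the image $I A I^{-1}$ of $A = A_{\alpha,\beta,\kappa,\ln r}$; that is, that $f\in\dom{A^I}$ iff $I^{-1}f\in\dom{A}$, and then $A^I f = I A I^{-1} f$.

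For this identification, fix $f\in C(V_r)$ and set $g\coloneqq I^{-1}f$, so that $g(x)=f(\e^x)$ and $f(\rho)=g(\ln\rho)$. As in the proof of Lemma \ref{lem2}, $f$ is twice continuously differentiable on $[r,1-]$ precisely when $g$ is twice continuously differentiable on $[\ln r,0-]$, and in that case the chain rule gives $f'(\rho)=\rho^{-1}g'(\ln\rho)$ and
\[ \kappa\bigl(\rho^2 f''(\rho)+\rho f'(\rho)\bigr)=\kappa\, g''(\ln\rho),\qquad \rho\in[r,1-]. \]
Evaluating derivatives at the relevant points, $f'(r)=r^{-1}g'(\ln r)$, $f'(1-)=g'(0-)$, while $f(1+)=g(0+)$ and $f(1-)=g(0-)$. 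Hence the boundary condition $g'(\ln r)=0$ is equivalent to $f'(r)=0$, and the transmission condition $g'(0-)=\beta\bigl(g(0+)-g(0-)\bigr)$ is equivalent to $f'(1-)=\beta\bigl(f(1+)-f(1-)\bigr)$, which are exactly the conditions \eqref{ai-conditions} defining $\dom{A^I}$.

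It remains to match the action of the operator. By definition $I A I^{-1} f(\rho)=(Ag)(\ln\rho)$, so at the isolated point $\rho=1+$ this equals $(Ag)(0+)=-\alpha g(0+)+\alpha g(0-)=-\alpha f(1+)+\alpha f(1-)$, and on $[r,1-]$ it equals $\kappa g''(\ln\rho)=\kappa\bigl(\rho^2 f''(\rho)+\rho f'(\rho)\bigr)$; this is precisely the formula displayed in the statement. Since all of the equivalences above are genuine equivalences, $\dom{A^I}$ is the set described and $A^I=I A I^{-1}$, so the proposition follows from Lemma \ref{lem3}. There is no serious obstacle here; the one point that requires care is the bookkeeping of the one-sided objects at the membrane — making sure $\ln$ sends $1-$ to $0-$ and $1+$ to $0+$, that the one-sided derivative at $1-$ transforms with the factor $\rho^{-1}$ evaluated at $\rho=1$, and hence that the transmission coefficient $\beta$ passes through unchanged — which is exactly the phenomenon already encountered in Lemma \ref{lem2}.
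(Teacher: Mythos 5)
Your proposal is correct and follows exactly the route the paper takes: the paper states Proposition \ref{prop2} as an immediate consequence of Lemma \ref{lem3} via the isomorphism $I$ of \eqref{iip}, leaving the chain-rule bookkeeping implicit because it parallels the computation already carried out in Lemma \ref{lem2}. You have simply written out those details (correctly), so there is nothing to add.
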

  
\vspace{0.2cm}
\textbf {Acknowledgment.}  This research is supported by National Science Center (Poland) grant
2017/25/B/ST1/01804.


\bibliographystyle{plain}
\bibliography{../../../../../bibliografia}
\end{document}